\newtheorem{theorem}{Theorem}[section]
\newtheorem{problem}[theorem]{Problem}
\newtheorem{lemma}[theorem]{Lemma}
\newtheorem{corollary}[theorem]{Corollary}
\newtheorem{remark}[theorem]{Remark}
\newtheorem{definition}[theorem]{Definition}
\DeclareMathOperator{\Cay}{Cay}
\newcommand{\ZZ}{\ensuremath{\mathbb{Z}}}
\newcommand{\NN}{\ensuremath{\mathbb{N}}}
\newcommand{\RR}{\ensuremath{\mathbb{R}}}
\newcommand{\tet}[1]{\ensuremath{{^{#1}p}}}
\def\Orbit{\mathcal C}
\def\Forward{\mathcal C^+}
\def\Project{\mathcal P}
\def\Linear{\mathcal L}
  \author{Bernhard Kr\"on, J\"org Lehnert, Maya Stein}
\thanks{MS was supported by grants Fapesp PQ-EX 2008/50338-0, Fondecyt 11090141}
  \title[Boundaries of \uppercase{HNN}-extensions and distortion phenomena]{Linear and projective boundaries in \uppercase{HNN}-extensions and distortion phenomena}
\begin{document}
\maketitle

\begin{abstract}
Linear and projective boundaries of Cayley  graphs were introduced in~\cite{kst} as  quasi-isometry invariant   boundaries of finitely generated groups. They consist of forward orbits $g^\infty=\{g^i: i\in \mathbb N\}$, or orbits $g^{\pm\infty}=\{g^i:i\in\mathbb Z\}$, respectively, of non-torsion elements~$g$ of the group $G$, where `sufficiently close' (forward) orbits become identified, together with a metric bounded by 1. 
\newline 
We show that for all finitely generated groups, the distance between the antipodal points $g^\infty$ and $g^{-\infty}$ in the linear boundary is bounded from below by $\sqrt{1/2}$, and we give an example of a group which has two antipodal elements of distance at most $\sqrt{12/17}<1$. Our example is a  derivation of the Baumslag-Gersten group. 
\newline
We also exhibit a group with elements $g$ and $h$ such that $g^\infty = h^\infty$, but $g^{-\infty}\neq h^{-\infty}$. Furthermore, we introduce a notion of average-case-distortion---called growth---and compute explicit positive lower bounds for distances between points $g^\infty$ and $h^\infty$ which are limits of group elements $g$ and $h$ with different growth.
\end{abstract}
  \keywords{\noindent Keywords: HNN-extension, boundaries of groups, Baumslag-Gersten group, group distortion, growth\\ MSC {20F65 (20E06,05C63)}}

\section{Introduction}

One of the most important classes of groups studied  in Geometric Group Theory is the class of word-hyperbolic groups  (also referred to as Gromov-hyperbolic groups). Word-hyperbolic groups admit several geometric tools which can be used to derive algebraic properties.
Since in Geometric Group Theory the focus lies on the large-scale geometry of the group, these tools are only defined up to quasi-isometries. An important large-scale invariant of  a hyperbolic group is its Gromov-boundary. The present work is part of a program to understand up to which extent one can generalize this concept to arbitrary finitely generated groups. 

A new concept of quasi-isometry invariant boundaries of metric spaces has recently been introduced by Kr\"on, Lehnert, Seifter and Teufl~\cite{kst}. It is related to a concept due to Bonnington, Richter and Watkins~\cite{bonnington07between}. This concept is rather general and for instance, Tits' boundary of a \uppercase{CAT}$(0)$ space (see~\cite[Section 9]{bridson99}) fits into it, after a small modification. See~\cite{kst} for a more detailed discussion of this relationship.

We will not recall the full concept for metric spaces, because here, we are only interested in two applications to Cayley graphs of finitely generated groups, namely the linear and the projective boundary, which we shall introduce next.

Let $G$ be a group generated by a set $X$. The Cayley graph $\Gamma=(V,E)=\Cay(G,X)$ is the graph with vertex set $V=G$ and edge set $E=\{\{g,h\}: g^{-1}h\in X\}$. Let $d$ be the graph metric of $\Gamma$. That is, $d(g,h)$ is the length of the shortest path in $\Gamma$ from $g$ to $h$.

For $g\in G$ of infinite order let $g^\infty:=\{g^n:n\in \NN\}$ denote the cyclic subsemigroup generated by $g$. We also call  $g^\infty$ the {\em forward orbit} of $g$. Let $g^{\pm\infty}:=\{g^k:k\in \ZZ\}$ denote the cyclic subgroup generated by $g$, and we call $g^{\pm\infty}$ the {\em orbit} of $g$. The {\em backward orbit} $g^{-\infty}$ is defined analogously. 

Let $\Orbit G$ and $\Forward G$ denote the family of infinite orbits or infinite forward orbits, respectively. That is, we set
\[
\Orbit G:=\{g^{\pm\infty}: g\in G,\ |g|=\infty\}
\]
 and  
 \[
 \Forward G:=\{g^\infty: g\in G,\ |g|=\infty\}.
 \]
We want to measure the distance between two orbits as if it were an angle. For this, fix $\alpha>0$ and $c\in\mathbb N$, and call the set
\[
\alpha\cdot g^\infty+c:=\{v\in G: \exists n\in\mathbb N \text{ such that } d(v,g^n) \leq \alpha\cdot d(1, g^n)+c\}
\]
the \emph{$(\alpha,c)$--cone} around $g^\infty$. In other words, the \emph{$(\alpha,c)$--cone} around $g^\infty$ is the union of all balls with center $g^n$ and radius $\alpha \cdot d(1,g^n)+c$. Analogously we define the $(\alpha,c)$-cone around $g^{\pm\infty}$ as
\[
\alpha\cdot g^{\pm\infty}+c:=\{v\in G: \exists k\in\mathbb \ZZ \text{ so that } d(v,g^k) \leq \alpha \cdot d(1, g^k)+c\}.
\]
We write $h^\infty \in \alpha\cdot g^\infty+c$ if $h^n\in \alpha\cdot g^\infty+c$ for all $n\in\mathbb N$ and define $h^{\pm\infty} \in \alpha\cdot g^{\pm\infty}+c$ analogously. For $x,y\in\Orbit G$ or $x,y\in\Forward G$ set
\[
s_X(x,y):=\inf \{\alpha\in\mathbb R: \exists c\in\mathbb N\text{ such that }x\in \alpha\cdot y+c\text{ and }y\in \alpha\cdot x+c\}.
\]
If $s_X(x,y)=0$ then we call $x$ and $y$ \emph{linearly equivalent}, this is an equivalence relation. We call two elements $g$ and $h$  \emph{ forward equivalent} if  $g^\infty\sim h^\infty$ and \emph{backward equivalent} if  $g^{-\infty}\sim h^{-\infty}$.

It is easy to check that the function $s_X$ is well defined on the set of equivalence classes 
and that the square root $t_X=\sqrt s_X$ is a metric on the quotient $\Forward G/\!\!\sim$ and on $\Orbit G/\!\!\sim$, respectively. The completion of the metric space $(\Forward G/\! \!\sim,t)$ is called the  {\em linear boundary} $\Linear G$ of $G$, the completion of the metric space $(\Orbit G/\!\!\sim,t_X)$ is called the {\em projective boundary}  $\Project G$ of $G$, or strictly speaking of $G$ with respect to the generating set $X$. 
Although the elements of the linear/projective boundary are equivalence classes of (forward) orbits $g^{(\pm)\infty}$, and not the (forward) orbits themselves, we shall slightly abuse notation and write $g^{(\pm)\infty}$ instead of $[g^{(\pm)\infty}]_\sim$ also for an element of the linear or projective boundary.

If $G$ is finitely generated and we change the finite set of generators then the resulting quotient spaces are bi-Lipschitz equivalent and hence the boundaries are homeomorphic. But  the values of $s_X$ and $t_X$ depend on the choice of generators. In most cases it will be clear out of context with respect to which set of generators we calculate $s_X$ and $t_X$, therefore, we will frequently suppress the index $X$. Moreover, by definition it is clear that the diameter of  $\Linear G$ and of $\Project G$ is at most~$1$. For more details we refer to \cite{kst}.

\medskip

The linear boundary of finitely generated nilpotent groups is (homeomorphic to) the disjoint union of spheres with dimensions $d_i$, which correspond to the free abelian quotients of rank $d_i+1$ in the central series, and  the projective boundary is (homeomorphic to) the disjoint union of projective spaces of the same dimension; see~\cite{kst}.
The latter fact relies on the observation that in the case of a nilpotent group the distance $t(g^\infty,h^\infty)$ equals the distance of the inverse elements $t(g^{-\infty},h^{-\infty})$ for all $g^\infty,h^\infty\in\Linear G$. Thus the space $\Project G$ can be obtained identifying each element with its inverse without changing distances (that is, for all $g,h\in G$ the distance $t(g^{\pm\infty},h^{\pm\infty})$ (in $\Project G$) equals the minimum of $t(g^\infty,h^\infty)$ and $t(g^\infty,h^{-\infty})$ (in $\Linear G$)).

One might guess that this yields a general method to construct the projective boundary but the results in Section~\ref{sect:g-ginv} show that this is not the case. In general it is not even true that $g^\infty=h^\infty$ implies $g^{-\infty}=h^{-\infty}$ hence the projective boundary is not necessarily a quotient of the linear boundary.

\begin{theorem}\label{forwibackwi}
There is a group $H$ with elements $g_1$ and $g_2$ which are forward-equivalent but not backward-equivalent.
\end{theorem}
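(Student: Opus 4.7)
The plan is to exhibit an explicit group $H$ with the desired property; since the theorem is purely an existence statement, the entire content lies in the construction. The intuition is to exploit HNN-style distortion asymmetrically: I want positive powers of a stable letter $t$ to implement a strongly contracting operation on some element $c$, while negative powers of $t$ do not contract.

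Concretely, I would realize $H$ as an HNN extension $\langle K, t \mid t A t^{-1} = B \rangle$ of a base group $K$, with associated isomorphism $\phi \colon A \to B$ chosen so that $\phi$ rapidly shrinks word length on $c$ under forward iteration. Taking $g_1 = t$ and $g_2 = c t$ for a carefully chosen $c \in A$, the HNN relation $t c = \phi(c) t$ allows me to rewrite
\[
g_2^n = c \cdot \phi(c) \cdot \phi^2(c) \cdots \phi^{n-1}(c) \cdot t^n .
\]
Conjugating by $t^{-n}$ then shows that $d(g_1^n, g_2^n)$ equals the word length of a specific element of $K$, which I would arrange to be $o(n)$ by a careful choice of $K$, $A$, $B$, and $c$. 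This gives forward equivalence $g_1^\infty \sim g_2^\infty$. The backward calculation is structurally analogous but produces conjugates $\phi^i(c^{-1})$ under \emph{positive} powers of $t$; these lie outside the contracting region of $\phi$ and have combined word length at least linear in $n$, yielding $d(g_1^{-n}, g_2^{-n}) \geq \alpha_0 n$ for some constant $\alpha_0 > 0$.

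I expect two main obstacles. First, achieving genuinely sublinear forward distance is surprisingly delicate: a direct check shows that $BS(1,2)$ with $g_1 = t$, $g_2 = at$ only gives $d(g_1^n, g_2^n) = \Theta(n)$, so ordinary Baumslag--Solitar groups do not suffice. One seems forced to use severely distorted groups such as the Baumslag--Gersten group or a suitable derivation of it (as elsewhere in this paper), where cyclic subgroups exhibit tower-function distortion and so the product $c \phi(c) \cdots \phi^{n-1}(c)$ can be short enough after the conjugation by $t^{-n}$. Second, the lower bound for backward non-equivalence must rule out \emph{every} matching $g_1^{-n} \leftrightarrow g_2^{-m}$, not just the diagonal $m = n$. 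I would handle this by analysing the Bass--Serre tree of the HNN splitting: the negative orbits of $g_1$ and $g_2$ translate along distinct axes in this tree, and Britton's lemma forces any word representing $g_1^n g_2^{-m}$ to cross a linear number of edges, providing the required linear lower bound independently of $m$.
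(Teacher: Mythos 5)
Your high-level plan is the right one and mirrors the paper's: take $g_1=t$, $g_2=ct$ in a group where $t$-conjugation produces heavy forward distortion, so that $g_1^{-k}g_2^k$ lands in a massively distorted subgroup while $g_1^{k}g_2^{-k'}$ does not. You also correctly observe that $BS(1,2)$ fails because $d(1,a^{2^{k+1}-2})=\Theta(k)$ there, and that the backward estimate must handle every pairing $g_1^{-l}\leftrightarrow g_2^{-l'}$, not just the diagonal. But the proposal stops short of exhibiting a group, and the candidate you point to is the wrong one. The paper does \emph{not} use the Baumslag--Gersten group for this theorem (that group is reserved for Theorem~\ref{thm:small_distance}); in $G_p$ one has $(at)^k=_{G}t^k\,a_k a_{k-1}\cdots a_1$ with $a_i=t^{-i}at^i$, which is not a power of $a$ and whose length is not obviously sublogarithmic. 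Instead the paper builds
\[
H=\langle a,s,t,x\mid t^{-1}at=a^2,\ s^{-1}as=a^2,\ x^{-1}sx=s^2\rangle,
\]
keeping $\langle a,t\rangle\cong BS(1,2)$ (so $g_2^k=t^k a^{2^{k+1}-2}$ still holds and $t^k$ stays undistorted, since every relator has $t$-exponent sum $0$), but adding a second stable letter $s$ that mimics $t$ on $a$ and a third letter $x$ that exponentially distorts $\langle s\rangle$. That yields $a^{2^{k+1}-2}=s^{-(k+1)}as^{k+1}a^{-2}$ with $d(1,s^{k+1})=O(\log k)$, hence $d(g_1^k,g_2^k)=O(\log k)$; this is the specific device your sketch is missing.

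Your backward argument is also too coarse. You claim $d(g_1^{-n},g_2^{-m})\geq\alpha_0 n$ for some $\alpha_0>0$, but to refute $g_2^{-\infty}\in\alpha\,g_1^{-\infty}+c$ for \emph{every} $\alpha<1$, an $n$-only lower bound needs $\alpha_0\geq 1$ (otherwise for $\alpha\in(\alpha_0,1)$ the bound is vacuous for large $n$); what one really needs, and what the paper proves, is a lower bound that grows in both indices. The mechanism is also different from ``conjugates of $c^{-1}$ leaving the contracting region'': after partial reduction $g_1^{l}g_2^{-l'}=_H t^{l-1}a^{-(2^{l'}-1)}t^{-(l'-1)}$, and the exponent $2^{l'}-1$ is \emph{odd}, so in $BS(1,2)$ no further $t$-cancellation is possible; combined with the normal form for the amalgam $H=H_1*_{\langle a\rangle}H_2$ this forces at least $(l-1)+(l'-1)$ letters $t^{\pm1}$ in any representative, giving $d(g_2^{-l'},g_1^{-l})\geq l+l'-2$. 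A Bass--Serre-tree argument could in principle deliver the same conclusion, but as written your proposal neither identifies the parity obstruction nor produces a bound linear in both $n$ and $m$, so the backward non-equivalence is not established.
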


The proof of Theorem~\ref{forwibackwi} is given in Section~\ref{sect:g-ginv}.

\medskip

Knowing of this counterintuitive phenomenon,
 it is natural to ask whether the \lq algebraic antipodal\rq{} $g^{-\infty}$ of $g^\infty\in\Forward G$ is also the metric antipodal. In other words, one would like to know whether $t(g^\infty,g^{-\infty})$ is always $1$ or if at least this distance is universally bounded away from~$0$. We show that the answer to the first question is negative, but that there is a positive lower bound for $t(g^\infty,g^{-\infty})$.

\begin{theorem}\label{thm:distances}\ 
\begin{enumerate}[(a)]
\item For any finitely generated group $G$ and any
 $g\in G$ of infinite order we have $t(g^\infty,g^{-\infty})\geq \sqrt {1/2}$.
\item There exists a group $G$ generated by the finite set $X$ which has an element  $g$ such that $t_X(g^\infty,g^{-\infty})\leq \sqrt{12/17}$.
\end{enumerate}
\end{theorem}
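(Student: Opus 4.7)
For part (a), my plan is to argue by contradiction. Set $L_n := d(1, g^n)$. Left-invariance of the Cayley metric gives $d(g^i, g^j) = L_{j-i}$ and $L_{-n} = L_n$, so subadditivity $L_{m+n} \le L_m + L_n$ and the reverse bound $L_{m+n} \ge |L_m - L_n|$ both hold. Because $g$ has infinite order in the locally finite Cayley graph, $L_n \to \infty$. Suppose $s(g^\infty, g^{-\infty}) < 1/2$ and fix $\alpha < 1/2$, $c \in \mathbb{N}$ with $g^{-\infty} \in \alpha \cdot g^\infty + c$. Unwinding the definition, for every $n \in \mathbb{N}$ there is some $m = m(n) \in \mathbb{N}$ with $L_{m+n} \le \alpha L_m + c$. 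Combining with $L_{m+n} \ge L_m - L_n$ yields $(1-\alpha) L_m \le L_n + c$, and plugging this back in gives
\[
L_{m+n} \le \beta L_n + c', \qquad \beta := \frac{\alpha}{1-\alpha} < 1,\ c' := \frac{c}{1-\alpha}.
\]

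Next I would iterate this estimate. Starting from an arbitrary large $n_0$, define $n_{k+1} := n_k + m(n_k)$; the sequence $(n_k)$ is strictly increasing, and summing the geometric recursion $L_{n_{k+1}} \le \beta L_{n_k} + c'$ yields $L_{n_k} \le \beta^k L_{n_0} + c'/(1-\beta)$. Hence $(L_{n_k})$ stays bounded while $n_k \to \infty$. Since balls in the Cayley graph are finite and the elements $g^{n_k}$ are pairwise distinct, this is a contradiction. This proves (a).

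For part (b), the plan is to construct the example inside a one-relator group derived from the Baumslag--Gersten group which, as announced in the abstract, contains an infinitely iterated HNN-extension as an isometrically embedded subgroup. The mechanism to exploit is extreme conjugation distortion: inside a repeated HNN-extension, large powers of a ``base'' generator can be rewritten via successive stable-letter conjugations as words whose length is much smaller than the exponent itself. I would select an element $g$ whose forward and backward orbits both traverse this tower, arranged so that for every large $n$ one can find an exponent $m = m(n)$ and a short combinatorial path linking $g^n$ to $g^{-m}$ by means of the conjugation relations of the tower.

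The main obstacle is verifying that the embedding of the iterated HNN-extension into the ambient one-relator group is isometric and not merely quasi-isometric: the specific constant $12/17$ can only be read off from length equalities, not from bi-Lipschitz comparisons. Once this is in place, the remainder is a direct computation: exhibit explicit short normal forms for $g^m$ and for the connecting path inside the tower, and then optimise the ratio $d(g^n, g^{-m})/L_m$ as $n \to \infty$. The value $12/17$ will appear as the infimum of this ratio dictated by the arithmetic of the chosen HNN-relations, yielding $s(g^\infty, g^{-\infty}) \le 12/17$ and hence $t(g^\infty, g^{-\infty}) \le \sqrt{12/17}$.
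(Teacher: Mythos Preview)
Your argument for part~(a) is correct and takes a genuinely different route from the paper. The paper selects a sequence of ``record'' indices $i_n$ at which $L_{i_n}$ is smaller than all subsequent $L_k$, and for those directly bounds $\alpha$ from below by $(L_{i_n+m(i_n)}-c)/(2L_{i_n+m(i_n)}) \to 1/2$. Your contraction argument is a clean alternative: from $L_{n+m(n)} \le \beta L_n + c'$ with $\beta<1$ you iterate to trap infinitely many distinct $g^{n_k}$ in a bounded ball. One small point: you assert that $(n_k)$ is strictly increasing, which needs $m(n_k)\ge 1$; if the convention allows $m=0$, note that $m(n)=0$ forces $L_n\le c$, so a positive $m(n)$ can always be chosen once $L_n>c$, and the iteration can be restarted beyond any bound.

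For part~(b), what you have written is a plan rather than a proof, and the plan misidentifies where the real difficulty lies. The paper's example is $G_p=\langle a,t\mid t^{-1}a^{-1}tat^{-1}at=a^p\rangle$ with $p\ge 20$ and $g=a$. The \emph{upper} bound on $d(a^{-n},a^h)$ is indeed easy: one writes down explicit words $w_k$ of length $3\cdot 2^{k+1}-5$ representing $a^{\tet{k}}$. The crux, however, is the \emph{lower} bound: one must show that for $h=\tet{k}-n$ the distance $d(1,a^h)$ is at least roughly $3\cdot(2^{k+1}+2^k)-2^{k-1}$, so that the ratio $d(a^{-n},a^h)/d(1,a^h)$ drops below $12/17$. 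This is the content of Lemma~\ref{lemma:final}, and its proof is a delicate induction on $k$ using Britton's lemma, a decomposition of any candidate geodesic into blocks $t^{-1}u_it$, and careful arithmetic with towers of exponents; it occupies an entire section and relies on several preparatory lemmas about which letters $a_j$ must or cannot occur in short representatives. Describing this as ``a direct computation'' once an isometric embedding is verified substantially understates what is required: without a concrete mechanism for establishing such length lower bounds, the constant $12/17$ cannot be extracted.
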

The proof of this result will span from Section~\ref{sect:forward_back} to Section~\ref{sec:final}.
While the proof of the first part of  Theorem~\ref{thm:distances} is not overly complicated, the proof of the second part is quite lengthy and takes up most of these three sections in which we give an example of a family of such groups. The groups in question are derivations of the so called Baumslag-Gersten group and in order to prove our theorem we have to understand some of the intrinsic geometry of these groups. Note that for the group constructed for the second part of the statement it is not hard to see that for all $g\in G$ it holds: $\max_{h\in G}t_X(g^\infty,h^\infty)=1$. This remark goes back to a suggestion of an anonymous referee of this paper and actually it sounds reasonable that this statement is true for all finitely generated groups $G$ but we have not been able to prove it, yet.

\medskip

As we will see, the geometry of a cyclic subgroup can be very different from the usual geometry of the group of integers. This phenomenon is known as distortion and leads to one of the asymptotic invariants studied by Gromov in his seminal book~\cite{Gro}. For an element $h$ of a group $G$ generated by the finite set $X$ let $|h|_X$ denote  the length of the shortest word representing $h$ in letters of $X^\pm$, where $X^\pm=\{x\in G: x\in X\mbox{ or } x\in X^{-1}\}$. Gromov defines the distortion function for a subgroup $H$ generated by the finite set $Y$ as:
\[
\Delta_G^H(r):=\frac 1r\max\{|h|_Y : h\in H,\ |h|_X\leq r\}.
\]
This function measures something like a worst-case distortion and can easily be superexponential, for instance in the group $G_p$ of Theorem~\ref{thm:small_distance}. Such examples suggest that the factor $1/r$ is a bit artificial and in fact nowadays most authors follow the definition of Farb~\cite{farb} who defined the distortion function just as $\Delta_G^H(r):=\max\{|h|_Y : h\in H,\ |h|_X\leq r\}$.

In the context of this work we are interested in the distortion of cyclic subgroups (or even cyclic subsemigroups). But as we would like to view these subgroups just as a set rather than as a sequence, worst-case considerations do not seem appropriate.  
A better fitting concept will be a kind of average-case distortion for cyclic subgroups---called growth of elements---which we define as follows:

\begin{definition}
  Let $G$ be a group generated by the finite set $X$ and let $g\in G$. The \emph{growth} of $g$ is the function $w_g(n):\NN\rightarrow \NN$ which counts the number of elements of the type $g^i$ in the ball $B_1(n)$ of radius $n$ around $1$:
\[
w_g(n):=|\{i\in\mathbb Z:|g^i|_X\leq n\}|.
\]
\end{definition}

\medskip

Note that for the group $H=\langle g\rangle$ our growth function $w_g(n)$ measures the number of elements of $H$ in the ball of radius $r$ around $1$, while Gromov's distortion $\Delta_G^H(r)$ determines the absolute value of the maximum of all $i$ such that $g^i$ still lies in this ball. 

There are some easy bounds on the growth.
First of all, balls in Cayley graphs grow at most exponentially fast. Namely, it is easy to see that the upper bound $w_g(n)\leq|B_1(n)|\leq (2|X|-1)(2|X|)^{n-1}$ holds. 
Less obvious but still straight-forward is the following fact. For all $k\in\NN$, we have
\[
w_g(kn)\geq k\cdot w_g(n).
\]
For instance, the groups which will be defined in Theorem~\ref{thm:small_distance} contain elements with exponential growth function,  and in free nilpotent groups of class $c$ the growth function of a central element is equivalent to $n^c$. The results of Olshanskii and Sapir~\cite{OlsSap} on length functions of subgroups, which are a very precise measure for distortion phenomena,
suggest that there exist a broad variety of growth functions for elements. It seems natural to ask the following question:

\begin{problem}\label{prob:dist}
Can two elements $g$ and $h$ of a group, whose forward orbits are linearly equivalent, have growth functions of different order?
\end{problem}

In Section~\ref{sect:distort} we will give a partial solution to this problem. If  $g$ is an element of exponential growth,
then there is even a minimal distance between $g^{\pm\infty}$, and any other orbit of $\Project G$ of an element $h$ of the group which has a different growth. 
This minimal distance depends on the number of generators of $G$ and the growth functions of $g$ and $h$. Our lower bound also holds for the minimal distance in $\Linear G$.

To make this statement more precise we will use Landau notation. Recall that for a function $f,g:\NN\rightarrow\NN$ the notation $f(n)\in\omega(g(n))$ can be translated to $\forall k>0\ \exists n_0$ such that $\forall n>n_0$ holds: $f(n)\geq k\cdot g(n)$.  
In the same manner $f(n)\in o(g(n))$ translates to $\forall k>0\ \exists n_0$ such that $\forall n>n_0$ holds: $f(n)\leq k\cdot g(n)$.
\begin{theorem}\label{lem:mindist}
For every $d\in\mathbb N$, $\delta >1$ and $\gamma >\delta$ there is a $t_{\min}=t_{\min}(d,\gamma,\delta)>0$ such that for each group $G$, each generating set $X$ of cardinality $d$, and any $g,h\in G$ with $w_g(n)\in \omega(\gamma^n)$ and $w_h(n)\in o(\delta^n)$ we have that  
$$t(g^{\pm\infty}, h^{\pm\infty})\geq t_{\min}\ \text{ and } \ t(g^{\infty}, h^{\infty})\geq t_{\min}.$$
A possible choice is $t_{\min}=\sqrt{\log_{(2d-1)\gamma}{\frac{\gamma}{\delta}}}$.
\end{theorem}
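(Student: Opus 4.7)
The plan is to argue by contrapositive via a volume/counting bound: if the orbits $g^{(\pm)\infty}$ and $h^{(\pm)\infty}$ were too close in the $s$-metric, then every $g$-power in the ball $B_1(n)$ would have to sit close to some $h$-power, and combining this with the assumed growth bound on $h$ and the trivial exponential cap on balls in $\Cay(G,X)$ would give an upper estimate on $w_g(n)$ incompatible with $w_g(n)\in\omega(\gamma^n)$, provided the $s$-distance is smaller than a threshold depending only on $d,\gamma,\delta$.

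Suppose toward a contradiction that $s(g^{\pm\infty},h^{\pm\infty})<\alpha$ for some small $\alpha>0$; then there is $c\in\NN$ with $g^{\pm\infty}\in\alpha\cdot h^{\pm\infty}+c$ (the reverse inclusion is not needed). For each $i\in\ZZ$ with $|g^i|_X\leq n$, choose $k_i\in\ZZ$ with $d(g^i,h^{k_i})\leq\alpha|h^{k_i}|_X+c$. A triangle inequality at $1$ gives $|h^{k_i}|_X\leq M:=(n+c)/(1-\alpha)$, so every such $g^i$ lies in the ball of radius $R:=\alpha M+c$ around one of at most $w_h(M)$ elements $h^{k_i}$. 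Since $G$ is generated by $d$ elements, every such ball contains at most $(2d)^{R+1}$ points, so
\[
w_g(n)\;\leq\;w_h(M)\cdot(2d)^{R+1}.
\]
Using $w_h(m)\in o(\delta^m)$ one has $w_h(M)\leq\delta^M$ for all large $n$, and substituting the formulas for $M$ and $R$ yields
\[
w_g(n)\;\leq\;K(\alpha,c,d,\delta)\cdot\beta(\alpha)^n,\qquad \beta(\alpha):=\delta^{1/(1-\alpha)}(2d)^{\alpha/(1-\alpha)},
\]
where the constant $K$ absorbs the $c$-dependent additive shift in the exponent.

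Since $\beta(0)=\delta<\gamma$, continuity of $\beta$ at $0$ produces $\alpha_0=\alpha_0(d,\gamma,\delta)>0$ with $\beta(\alpha_0)<\gamma$. If $s(g^{\pm\infty},h^{\pm\infty})<\alpha_0$ we may take $\alpha\leq\alpha_0$ in the display above, forcing $w_g(n)/\gamma^n\to 0$ and contradicting $w_g(n)\in\omega(\gamma^n)$. Hence $t(g^{\pm\infty},h^{\pm\infty})\geq\sqrt{\alpha_0}=:t_{\min}(d,\gamma,\delta)$. The linear-boundary inequality follows from exactly the same argument applied to the positive-power count $w_g^+(n):=|\{i\in\NN:|g^i|_X\leq n\}|$: because the generating set is symmetric, $|g^i|_X=|g^{-i}|_X$, so $w_g^+(n)=(w_g(n)+1)/2$ is still $\omega(\gamma^n)$ and $w_h^+$ is still $o(\delta^n)$. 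The one delicate point, and the main obstacle I anticipate, is precisely that the constant $c$ is chosen after $\alpha$ and cannot be bounded in terms of $d,\gamma,\delta$; what saves the argument is that $c$ enters only as an additive shift in the exponent $R=\alpha M+c$, hence only as a multiplicative prefactor $K$, which the exponential gap $\gamma/\beta(\alpha_0)>1$ ultimately absorbs.
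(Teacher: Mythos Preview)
Your argument is correct and is essentially the same as the paper's: both cover the $g$-powers in $B_1(n)$ by balls of radius $\alpha M+c$ around the at most $w_h(M)$ relevant $h$-powers (with $M=(n+c)/(1-\alpha)$ coming from the same triangle-inequality step), and then compare the resulting exponential bound with $w_g(n)\in\omega(\gamma^n)$. The only cosmetic differences are that the paper applies pigeonhole to a single ball rather than summing over all of them, uses the slightly sharper base $2d-1$, and solves explicitly for the threshold $\alpha\geq\log_{(2d-1)\gamma}(\gamma/\delta)$, whereas you invoke continuity of $\beta(\alpha)$ at $0$; none of this changes the substance of the proof.
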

Note that the assumption $w_g(n)\in \omega(\gamma^n)$ already implies that $d\geq 2$ and therefore the logarithm is well defined.

In order to be able to speak of the growth of an element of a group without fixing a generating set,
we consider equivalence classes of growth functions rather than explicit functions. Functions $f,g:\NN\rightarrow\NN$ are called \emph{weakly equivalent} if there exist constants $c_1,c_2$ such that 
\begin{eqnarray*}
g(n)&\leq& c_1f(c_1n+c_2)+c_2 \textrm{ and}\\
 f(n)&\leq& c_1g(c_1n+c_2)+c_2
\end{eqnarray*}
hold. If $X$ and $Y$ are finite generating sets for $G$, then $\Cay(G,X)$ and $\Cay(G,Y)$ are bi-Lipschitz equivalent and therefore the growth function of $g$ with respect to $X$ and the growth function of $g$ with respect to $Y$ are weakly equivalent. Note that this equivalence separates exponential functions from sub-exponential functions and hence having an exponential growth function is a property of the group element which is independent of the chosen generating set.

We say that an element of a finitely generated group has \emph{exponential growth} if there is a finite generating set $S$ of $G$ such that the growth function of $g$ with respect to $S$ is exponential  (by the preceding paragraph, this then holds for any finite generating set $S$).
Theorem~\ref{lem:mindist} immediately gives the following corollary.

\begin{corollary}\label{cor:mindist}
If $g$ is an element of a finitely generated group that has exponential growth, then every element $h$ with $g^{\infty}=h^{\infty}$ (or with with $g^{\pm\infty}=h^{\pm\infty}$) also has exponential growth.
\end{corollary}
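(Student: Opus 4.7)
The plan is to prove the contrapositive directly from Theorem~\ref{lem:mindist}. Assume that $h$ does \emph{not} have exponential growth; it suffices to produce a strictly positive lower bound on both $t(g^{\infty},h^{\infty})$ and $t(g^{\pm\infty},h^{\pm\infty})$, since then the orbits cannot be equal in the respective boundaries.

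The only real work is translating the qualitative hypotheses of the corollary into the quantitative $\omega$/$o$ asymptotics required by Theorem~\ref{lem:mindist}. From the hypothesis that $g$ has exponential growth, I would first extract a constant $\gamma>1$ with $w_g(n)\in\omega(\gamma^n)$: since $w_g$ is weakly equivalent to some $a^n$ with $a>1$ and $w_g$ is non-decreasing, unfolding the weak-equivalence inequalities gives $w_g(n)\geq cK^n$ for all large $n$ with some fixed $K>1$ and $c>0$, so any $\gamma\in(1,K)$ works. From the hypothesis that $h$ does not have exponential growth, I would then show $w_h(n)\in o(\delta^n)$ for every $\delta>1$, using the standard dichotomy that a non-decreasing function fails to be weakly equivalent to an exponential precisely when $\limsup w_h(n)^{1/n}=1$.

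With these asymptotic conversions in hand, fix a finite generating set of $G$ of size $d$, pick any $\delta\in(1,\gamma)$, and apply Theorem~\ref{lem:mindist} to the triple $(d,\gamma,\delta)$: this yields $t_{\min}(d,\gamma,\delta)>0$ with $t(g^{\infty},h^{\infty})\geq t_{\min}$ and $t(g^{\pm\infty},h^{\pm\infty})\geq t_{\min}$, contradicting $g^{\infty}=h^{\infty}$ (respectively $g^{\pm\infty}=h^{\pm\infty}$). The main obstacle, if any, is the asymptotic bookkeeping of the second paragraph---making sure both conversions really hold under the paper's notion of exponential growth; once this is settled, the corollary is an immediate application of Theorem~\ref{lem:mindist}.
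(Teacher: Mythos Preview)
Your approach is the paper's: it states only that Theorem~\ref{lem:mindist} ``immediately gives'' the corollary and offers no further argument, so your contrapositive-plus-translation is already more than the paper writes out. One caution on the bookkeeping you yourself flag: for a non-decreasing function bounded above by an exponential, failure to be weakly equivalent to an exponential is characterized by $\liminf_n w_h(n)^{1/n}=1$, not $\limsup$, and from $\liminf=1$ alone you do not get $w_h\in o(\delta^n)$ for every $\delta>1$; the cleanest repair is to observe that the proof of Theorem~\ref{lem:mindist} only uses the bound $w_h(n)\le c_2\delta^n$ for arbitrarily large $n$ (which $\liminf=1$ does supply), so the argument still goes through.
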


\medskip

Before we start let us fix some further notation. Throughout the paper $G$ will be a group generated by a (usually finite) set $X$. The free monoid over the alphabet $X^\pm$ will be denoted $X^*$ and $\ell$ is the length function on $X^*$. The assumption that $X$ is a generating set of $G$ implies the existence of a surjective monoid homomorphism $\pi:X^*\rightarrow G$ and it is straightforward that for $g,h\in G$ we have
\[d(g,h)=\min\{\ell(w): w\in X^*,\ \pi(x)=g^{-1}h\}.\]
Using this fact, we mostly work with representing words for group elements. We will use the shorthand notation $w_1=_Gw_2$ for $\pi(w_1)=\pi(w_2)$ whereas $w_1=w_2$ means that the two words as elements of $X^*$ are equal.

For $Y\subset G$ we will denote by $\langle Y\rangle_G$ the subgroup of $G$ generated by $Y$, i.e. the smallest subgroup of $G$ containing $Y$ and by $\langle\langle Y\rangle\rangle_G$ the normal closure of $Y$ in $G$, i.e. the smallest normal subgroup of $G$ containing $Y$.

Beginning with Section~\ref{sect:forward_back} we will have to work with huge powers. We will use the following notation: Let $^n p$ denote the tower of length $n$ of $p$th powers (often called tetration of $p$ by $n$), i.e.~$^0 p=1$ and $^n p = p^{^{(n-1)}p}$. So for instance $^3p=p^{p^p}$. (Note that by convention $a^{b^c}=a^{(b^c)}$, not $(a^b)^c$.)

We assume that the reader is familiar with the concept of \uppercase{HNN}-extensions and in particular with Britton's Lemma which most of our considerations concerning Part~(b) of Theorem~\ref{thm:distances} rely on.  Britton's Lemma can be used to derive a normal form for elements in \uppercase{HNN}-extensions and gives a necessary condition for a word to represent the identity. The standard references for these results (and many other facts on \uppercase{HNN}-extensions) are~\cite{lynschu} and~\cite{serre}.

\section{Distortion phenomena}\label{sect:distort}

The present section is dedicated to  the aforementioned distortion phenomena. We prove Theorem~\ref{lem:mindist}.

\begin{proof}[Proof of Theorem~\ref{lem:mindist}]
We will only show the result for the  elements of the projective boundary, that is, we show the existence of a number $t_{\min}$ such  that for each group $G$ that is generated by $d$ elements,  and any $g,h\in G$ with $w_g(n)\in \omega(\gamma^n)$ and $w_h(n)\in o(\delta^n)$,  the inequality $t(g^{\pm\infty}, h^{\pm\infty})\geq t_{\min}$ holds. The other part can be shown analogously.

We assume that $t(g^{\pm\infty}, h^{\pm\infty})<1$, since otherwise $1$ is the desired bound.

  Since  $w_g(n)\in \omega(\gamma^n)$ and $w_h(n)\in o(\delta^n)$ there exist constants $N_0,c_1,c_2$, such that for all $n>N_0$ it holds:
\begin{equation}\label{growthgh}
  w_g(n)\geq c_1\cdot \gamma^n \textrm{ and }
  w_h(n)\leq c_2\cdot  \delta^n
\end{equation}
Let $n>N_0$, let $\alpha\in \mathbb{R}$ s.t. $1 >\alpha >t(g^{\pm\infty}, h^{\pm\infty})^2=s(g^{\pm\infty}, h^{\pm\infty})$. 
 
By definition there exists a constant $c$ such that for all $i\geq 0$ there exists a $j=j(i)$ such that $$g^i\in B_{\alpha d(1,h^j) +c}(h^j).$$ If $d(1,g^i)\leq n$ then by the triangle-inequality, $$d(1,h^j)\leq d(1,g^i)+d(g^i,h^j)\leq n+\alpha d(1,h^j) +c$$ and thus $d(1,h^j)\leq \frac{n+c}{1-\alpha}$.

Set $I:=\{i\in\mathbb Z: d(1,g^i)<n\}$, and set $J:=\{j\in\mathbb Z:d(1,h^j)\leq \frac{n+c}{1-\alpha}\}$. Then for each $i\in I$ we have $j(i)\in J$.
By~\eqref{growthgh}, $|I|\geq c_1\gamma^n$ and $|J|\leq c_2\delta^{\frac{n+c}{1-\alpha}}$, and the latter is smaller than $c_3\delta^{\frac n{1-\alpha}}$ for some constant $c_3$.
Hence, by the pigeon-hole principle, there exists a $j\in J$, such that 
\[
|B_{\alpha d(1,h^j) +c}(h^j)|\geq \frac{c_1\cdot \gamma^n}{c_3\cdot \delta^{\frac{n}{1-\alpha}}}=\frac{c_1}{c_3}\left(\frac{\gamma}{\delta^{\frac{1}{1-\alpha}}}\right)^n.
\]
On the other hand $|B_{\alpha d(1,h^j) +c}(h^j)|$ is bounded above by a power of the number of generators $d$, namely by
\[|B_{\alpha d(1,h^j) +c}(h^j)|\leq 2d\cdot (2d-1)^{\alpha d(1,h^j) +c-1}.\]
We obtain the inequality
\begin{eqnarray*}
  \frac{c_1}{c_3}\left(\frac{\gamma}{\delta^{\frac{1}{1-\alpha}}}\right)^n&\leq&2d\cdot (2d-1)^{\alpha d(1,h^j) +c-1}\\
&\leq&2d\cdot (2d-1)^{\alpha \frac{n+c}{1-\alpha} +c-1}\\[10pt]
&=&c_4\cdot (2d-1)^{\frac{\alpha n}{1-\alpha}}\\[10pt]
&=&c_4\cdot \left((2d-1)^{\frac{\alpha}{1-\alpha}}\right)^n,
\end{eqnarray*}
for $c_4=2d\cdot (2d-1)^{\frac{\alpha c}{1-\alpha}+c-1}$. This has to be true for arbitrary large values of $n$, which is possible only if 
\[\begin{array}{llcl}
&\frac{\gamma}{\delta^{\frac{1}{1-\alpha}}}&\leq& (2d-1)^{\frac{\alpha}{1-\alpha}}\\[10pt]
\Leftrightarrow& \gamma^{1-\alpha}&\leq&(2d-1)^\alpha\cdot\delta\\[10pt]
\Leftrightarrow& \ln{\gamma}-\alpha\cdot\ln{\gamma}&\leq& \alpha\cdot\ln{(2d-1)}+\ln{\delta}\\[10pt]
\Leftrightarrow& \frac{\ln{\gamma}-\ln{\delta}}{\ln{(2d-1)}+\ln{\gamma}}&\leq&\alpha\\[10pt]
\Leftrightarrow&\log_{(2d-1)\gamma}{\frac{\gamma}{\delta}}&\leq&\alpha.
\end{array}
\]
Note that $\frac{\gamma}{\delta}<(2d-1)\gamma$ and therefore this lower bound is less than $1$. We obtain the lower bound $t(g^{\pm\infty}, h^{\pm\infty})\geq\sqrt{\log_{(2d-1)\gamma}{\frac{\gamma}{\delta}}}$.
\end{proof}

The complete answer to Problem~\ref{prob:dist} remains open. In addition it might be an interesting project to completely understand the relationship between the usual distortion of cyclic subgroups and the growth of the generating element. It obviously happens that cyclic subgroups of different distortion yield elements of the same growth type but whether it can also be the other way around is an open question.

\section{Forward- vs. backward-equivalence}\label{sect:g-ginv}

In this section we will construct a group $H$ that contains elements $g_1$ and $g_2$ for which  $g_1^\infty\sim g_2^\infty$ but $g_1^{-\infty}\not\sim g_2^{-\infty}$. The group $H$ is an iterated HNN-extension of a cyclic group (generated by the element $a$) with stable letters $s,t,x$  given by the presentation
\begin{eqnarray}\label{presentation}
H&=&\left\langle a,s,t,x \mid t^{-1}at=a^2,s^{-1}as=a^2,x^{-1}sx=s^2\right\rangle.
\end{eqnarray}
Thus $H$ is isomorphic to a free product with amalgamation $H=H_1\ast_{\langle a\rangle} H_2$ where $H_1$ is the Baumslag-Solitar group $BS(1,2)=\langle a,t\mid t^{-1}at=a^2\rangle$ and $H_2=\langle a,s,x\mid s^{-1}as=a^2,x^{-1}sx=s^2\rangle$ is an HNN-extension of $BS(1,2)=\langle a,s\mid s^{-1}as=a^2\rangle$ with associated subgroups $\langle s\rangle$ and $\langle s^2\rangle$. 

We use the group $H$ to prove Theorem~\ref{forwibackwi}.

\begin{proof}[Proof of Theorem~\ref{forwibackwi}]
We have to show that  $H$ contains elements $g_1$ and $g_2$ which are forward-equivalent but not backward-equivalent. We do this for  $g_1:=t$ and $g_2:=at$. 

First of all, we estimate the distance $d_H(1,g_i^k)$ for $k\in\ZZ$. In all defining relations of presentation~(\ref{presentation}) the exponent sum of $t$ is zero, hence any word representing $t^k$ needs at least $|k|$ times the letter $t$ (or $t^{-1}$ if $k<0$). So the word $t^k$ is geodesic and 
\begin{equation}\label{distg1}
 d(1,g_1^k)=|k|.
\end{equation}
 The same argument yields that 
\begin{equation}\label{distg2}
|k|\leq d(1,g_2^k)\leq 2|k|,
\end{equation}
 which will be a sufficient approximation for our purpose.

Let $k>0$. We can use the relation $t^{-1}at=a^2$, which is the same as $at=ta^2$, to see that 
\begin{equation}\label{g2}
 g_2^k=t^ka^{2^{k+1}-2}.
\end{equation}
By definition, the distance $d_H(g_1^k,g_2^k)$ is  the same as 
\begin{equation}\label{11-1}
d_H(1,g_1^{-k}g_2^k)=d_H(1,t^{-k}t^ka^{2^{k+1}-2})=d_H(1,a^{2^{k+1}-2}).
\end{equation}
 One easily checks that 
 \begin{equation}\label{11-2}
 a^{2^{k+1}-2}=s^{-(k+1)}as^{k+1}a^{-2}.
 \end{equation}
Hence to obtain an upper bound for $d_H(1,g_1^{-k}g_2^k)$ we need to find a good upper bound for $d_H(1,s^k)$. Let $k_mk_{m-1}\ldots k_0$ be the binary code for $k$ (that is, $k_i\in\{0,1\}$ and $k_m=1$). Then, because of the relation $x^{-1}sx=s^2$, it holds that  $(\prod_{i=0}^{m-1} s^{k_i}x^{-1})sx^m=s^k$. The fact that $m=\lfloor\log_2 k\rfloor$  gives us the upper bound $d_H(1,s^k)\leq 3\cdot\lfloor\log_2 k\rfloor+1$. Thus by~\eqref{11-1} and ~\eqref{11-2},
 \begin{equation}\label{11-3}
 d_H(1,g_1^{-k}g_2^k)\leq 6\cdot\lfloor\log_2 (k+1)\rfloor+5.
\end{equation}

In order to show that $d^X_{\mathcal{LH}}(g_1^\infty,g_2^\infty)=0$, we now fix an $\alpha>0$ and show that $d_{\mathcal{L}H}(g_1^\infty,g_2^\infty)\leq\alpha$. To do so, by~\eqref{distg2}, it suffices to show that there exists a constant $c=c(\alpha)$ such that for each $k$ there exist $k_1$ and $k_2$ such that $d_H(g_1^k,g_2^{k_1})<\alpha\cdot k_1+c$ and $d_H(g_2^k,g_1^{k_2})<\alpha\cdot k_2+c$. Choosing $k_1=k_2=k$ and using~\eqref{11-3}, this breaks down to the statement that there exists a constant $c=c(\alpha)$ such that 
\[6\cdot\lfloor\log_2 (k+1)\rfloor+5\leq\alpha\cdot k+c,\]
which is obviously true. This shows that $g_1$ and $g_2$ are forward-equivalent.

We shall now show that $g_1$ and $g_2$ are not backward-equivalent. In fact, we claim that $d_{\mathcal{L}H}(g_1^{-\infty},g_2^{-\infty})=1$. 
For this, by~\eqref{distg1}, it suffices to show that for each $c\in\mathbb N$ there exists an $l'\in\mathbb N$ such that for all $l\in\mathbb N$ the inequality
\[d_H(g_2^{-l'},g_1^{-l})>1\cdot d(1,g_1^l)  +c=l+c\]
holds. Set $l':=c+2$.
By definition, and because of the relation $t^{-1}a^{-1}t=a^{-2}$, we have
\[
d_H(g_2^{-l'},g_1^{-l})=d_H(1,g_1^lg_2^{-l'})=d_H(1,t^la^{-(2^{l'+1}-2)}t^{-l'}),
\]
where for the last equality we used~\eqref{g2}.

Now, let $h=t^la^{-(2^{l'+1}-2)}t^{-l'}$ be the word representing $g_1^lg_2^{-l'}$ in $H$ and we try to simplify it within the presentation of this group. Using $2^{l'}-1$ times the relation  $t^{-1}a^{-1}t=a^{-2}$ we obtain that $h=_Ht^{l-1}a^{-2^{l'}+1}t^{-l'+1}=:h'$. In order to give a lower bound for $d_H(1,h)$ we once again have to change our point of view. The group $H$ is an HNN-extension of $H_2$ with stable letter $t$ and associated subgroups $\langle a\rangle$ and $\langle a^2\rangle$. Let $\overline{h}$ be a geodesic word such that $h'=_H\overline{h}$. Hence $h'\overline{h}^{-1}=_H1$. We now iteratively apply Britton's Lemma to $h'\overline{h}^{-1}$. 

The number of $a^{-1}$'s in $h'$ is odd and therefore the $t$'s and $t^{-1}$'s belonging to $h'$ cannot cancel out (moving a $t$ from left to right through a power of $a$'s halves this power). Therefore they all have to cancel with corresponding $t^{-1}$ and $t$ letters in $\overline{h}^{-1}$. This implies that the geodesic word $\overline{h}$ has to contain $l-1$ times the letter $t$ and $l'-1$ times the letter $t^{-1}$. So,  

\[
d_H(g_1^{-l},g_2^{-l'})=d_H(1,h)\geq (l-1)+(l'-1)> 1\cdot l +c,
\]
 as desired.
\end{proof}

\section{The distance between $g^{\infty}$ and $g^{-\infty}$}\label{sect:forward_back}

The remainder of this paper is devoted to the proof of Theorem~\ref{thm:distances}. We split it into two parts. First we show in 
 Theorem~\ref{thm:larger_distance}  the easier lower bound for the distance  between two elements $g^{\infty}$ and $g^{-\infty}=(g^{-1})^\infty$ of the linear boundary of a finitely generated group $G$.  The more difficult part of Theorem~\ref{thm:distances} is obtained from Theorem~\ref{thm:small_distance}, which shows that there are examples of groups with elements $g$ where the distance between $g^{\infty}$ and $g^{-\infty}$ is strictly smaller than $1$. The proof of Theorem~\ref{thm:small_distance} will continue in Sections~\ref{sec:lemmas} and~\ref{sec:final}.
 
 But let us first show the easier bound:
 
\begin{theorem}\label{thm:larger_distance}
Let $g$ be an element of a finitely generated group of infinite order. Then  $t(g^\infty,g^{-\infty})\geq 1/\sqrt 2$.
\end{theorem}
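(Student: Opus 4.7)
The plan is to establish the equivalent inequality $s(g^\infty,g^{-\infty})\ge 1/2$ by contradiction. Write $\ell_m:=d(1,g^m)$; since left multiplication acts by isometries on the Cayley graph, $d(g^a,g^b)=\ell_{b-a}$ for all $a,b\in\mathbb Z$, and $\ell_{-m}=\ell_m$. Suppose toward a contradiction that some $\alpha<1/2$ and some $c\in\mathbb N$ satisfy $g^\infty\in\alpha\cdot g^{-\infty}+c$. Unpacking the definition of the $(\alpha,c)$-cone, for every $n\in\mathbb N$ there exists $k=k(n)\in\mathbb N$ with
\[
\ell_{n+k}\ =\ d(g^n,g^{-k})\ \le\ \alpha\,\ell_{k}+c.
\]

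The first step is a single triangle-inequality estimate. The identity $g^k=g^{-n}g^{n+k}$ gives $\ell_k\le\ell_n+\ell_{n+k}\le\ell_n+\alpha\ell_k+c$, hence
\[
\ell_{k(n)}\ \le\ \frac{\ell_n+c}{1-\alpha}\qquad\text{and therefore}\qquad \ell_{n+k(n)}\ \le\ \lambda\,\ell_n+\frac{c}{1-\alpha},
\]
where $\lambda:=\alpha/(1-\alpha)$ is a genuine \emph{contraction factor} precisely because the assumption $\alpha<1/2$ forces $\lambda<1$.

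The second step is to iterate this estimate. Since $g$ has infinite order and balls in the Cayley graph are finite, the set $F:=\{n\in\mathbb N:\ell_n\le c\}$ is finite (otherwise infinitely many distinct powers of $g$ would sit in the finite ball $B_1(c)$). Pick $n_0>\max F$ and define recursively $n_{i+1}:=n_i+k(n_i)$. Because $n_i>\max F$ implies $\ell_{n_i}>c$, the exponent $k(n_i)$ cannot be $0$ (otherwise the defining inequality would give $\ell_{n_i}\le c$), so $k(n_i)\ge 1$ and $(n_i)$ is strictly increasing. A routine induction with the contraction inequality yields
\[
\ell_{n_i}\ \le\ \ell_{n_0}+\frac{c}{1-2\alpha}\qquad\text{for all }i\ge 0.
\]

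Finally, this produces the contradiction: the infinitely many pairwise distinct group elements $g^{n_i}$ are forced into a fixed finite ball around $1$, so two of them must coincide, contradicting $|g|=\infty$. Hence no $\alpha<1/2$ can witness $s(g^\infty,g^{-\infty})$, which gives $s(g^\infty,g^{-\infty})\ge 1/2$ and therefore $t(g^\infty,g^{-\infty})\ge\sqrt{1/2}$. The main obstacle I expect is the bookkeeping in the iteration step: one must rule out the degenerate possibility $k(n)=0$ so that the sequence $(n_i)$ really does escape to infinity, and the only tool for this is the finiteness of Cayley-graph balls together with the infinite-order hypothesis—which is exactly why the assumption $|g|=\infty$ is indispensable.
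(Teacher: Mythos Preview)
Your proof is correct. Both arguments start from the same triangle-inequality step---writing $\ell_m=d(1,g^m)$, the cone condition gives $\ell_{n+k}\le\alpha\ell_k+c$ and the triangle inequality gives $\ell_k\le\ell_n+\ell_{n+k}$---but they finish differently. The paper rearranges these into
\[
\alpha\ \ge\ \frac{\ell_{n+k}-c}{\ell_n+\ell_{n+k}},
\]
and then chooses a sequence of ``record'' indices $i_n$ (those with $\ell_j>\ell_{i_n}$ for all $j>i_n$) so that $\ell_{i_n}\le\ell_{i_n+m(i_n)}$ and the right-hand side tends to $1/2$ directly. You instead substitute the second inequality into the first to obtain the contraction $\ell_{n+k(n)}\le\lambda\,\ell_n+c/(1-\alpha)$ with $\lambda=\alpha/(1-\alpha)<1$, and iterate to trap infinitely many distinct powers of $g$ in a fixed ball. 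The paper's route is a one-shot limit and avoids the iteration bookkeeping (in particular the check that $k(n_i)\ne 0$); your route has the mild advantage that it only ever uses one of the two cone inclusions defining $s$, so it actually proves the slightly stronger statement that no $(\alpha,c)$-cone around $g^{-\infty}$ with $\alpha<1/2$ can contain $g^\infty$.
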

\begin{proof}
Any ball in a group with respect to a finite generating set is finite. Hence 
\begin{equation}\label{22}
 \lim_{i\to\infty}d(1,g^{i})=\infty.
\end{equation}
Suppose $\alpha\in\RR$ is such that $s(g^\infty,g^{-\infty})<\alpha$. 
Then there is a $c\in\NN$ such that for each $i$ there exists an $m(i)\in \mathbb N$ with
\begin{eqnarray*}
d(g^{-i},g^{m(i)})&\leq&\alpha\cdot d(1,g^{m(i)})+c\\
&\leq&\alpha\cdot\left(d(1,g^{-i})+d(g^{-i},g^{m(i)})\right)+c,
\end{eqnarray*}
using the triangle-inequality. 
By \eqref{22}, there is an increasing sequence $(i_n)_{n\ge 1}$ such that
\begin{equation}\label{22b}
d(1, g^k)>d(1, g^{i_n})
\end{equation}
for all $k>i_n$. Thus
\begin{align*}
\alpha\ \geq & \ \frac{d(g^{-i_n},g^{m(i_n)})-c}{d(1,g^{-i_n})+d(g^{-i_n},g^{m(i_n)})}\\
= &\ \frac{d(1,g^{i_n+m(i_n)})-c}{d(1,g^{i_n})+d(1,g^{i_n+m(i_n)})}\\
\stackrel{\eqref{22b}}
{\geq} & \ \frac{d(1,g^{i_n+m(i_n)})-c}{2\cdot d(1,g^{i_n+m(i_n)})} \\
=& \ \frac 12 \left(1- \frac{c}{d(1,g^{i_n+m(i_n)})}\right).
\end{align*}
Since this inequality is valid for all $i_n$, $n\in\mathbb N$, and because of~\eqref{22}, we obtain that $\alpha\ge 1/2$. As $\alpha$ may be chosen arbitrarily close to  $s(g^\infty,g^{-\infty})$, this implies that $s(g^\infty,g^{-\infty})\ge 1/2$, and thus, $t(g^\infty,g^{-\infty})\ge 1/\sqrt 2$.
\end{proof}

We now turn to the  rather tedious proof of the second part of Theorem~\ref{thm:distances} which will span over the remainder of this section and the following two sections. 


\begin{theorem}\label{thm:small_distance}
Let $p\geq 20$. In the group $G_p=\langle a,t\mid t^{-1}a^{-1}tat^{-1}at=a^p\rangle$ it holds that $t(a^\infty,a^{-\infty})\leq \sqrt{12/17}$.
\end{theorem}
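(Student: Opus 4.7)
The plan is to prove $s(a^\infty, a^{-\infty}) \leq 12/17$, from which $t(a^\infty, a^{-\infty}) \leq \sqrt{12/17}$ follows by taking square roots. Since $d(a^i, a^{-m}) = d(1, a^{i+m})$ in any group, the task reduces to exhibiting, for every sufficiently large positive $i$, a positive integer $m = m(i)$ with
\[
d(1, a^{i+m}) \;\leq\; \tfrac{12}{17}\, d(1, a^m) + c
\]
for a uniform constant $c$; the symmetric inclusion $a^{-\infty}\subseteq(12/17)a^{\infty}+c$ will follow because the defining relation is invariant under $a\mapsto a^{-1}$.

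Setting $b := t^{-1}at$, the defining relation rewrites as $b^{-1}ab = a^p$, so $\langle a,b\rangle\leq G_p$ is a copy of $BS(1,p)$ and $G_p$ itself is an HNN-extension of $BS(1,p)$ with stable letter $t$ identifying $\langle a\rangle$ with $\langle b\rangle$ via $\phi: a\mapsto b$. Iterating the conjugation by $t$ one obtains $c_k:=t^{-k}at^k$ with $c_{k+1}^{-1}c_kc_{k+1}=c_k^p$, exhibiting the infinitely iterated HNN-subgroup promised in the abstract. An easy induction gives
\[
a^{p^n} \;=\; b^{-n}ab^n \;=\; t^{-1}a^{-n}t\,a\,t^{-1}a^nt,
\]
whence $d(1,a^{p^n})\leq 2n+5$; similarly $d(1,a^{qp^n})\leq 2n+q+4$ for $1\leq q<p$. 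These short forms will bound $d(1,a^{i+m})$ from above.

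Given $i$, I would choose $n$ and a digit $q\in\{1,\ldots,p-1\}$ so that $i+m=qp^{n+1}$, i.e.\ $m=qp^{n+1}-i$; when this is infeasible (for instance when $i$ lies too close to the nearest $qp^{n+1}$), one iterates the construction using a more refined tower element supplied by the $c_k$-hierarchy. The key step is then to establish, for the explicit $m$ so obtained, the lower bound
\[
d(1,a^m)\;\geq\;\tfrac{17}{12}\,d(1,a^{i+m}) - O(1).
\]
This is the main obstacle: the naive estimate $d_{G_p}(1,a^m)\geq d_{BS(1,p),\{a,b\}}(1,a^m)$ only produces ratios tending to $1$, because both $a^{p^n}$ and typical $a^m$ with $m<p^n$ have word-length of order $2n$ inside $BS(1,p)$. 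To extract the sharper constant $17/12$, I would analyze a shortest word $w\in\{a,t\}^{\ast}$ representing $a^m$ directly via Britton's Lemma for the HNN-decomposition $G_p = BS(1,p)\ast_\phi$: every such $w$ reduces, by successive pinches $t^{-1}a^k t\rightsquigarrow b^k$, to a $t$-free word $W\in\{a,b\}^{\ast}$ representing $a^m$ in $BS(1,p)$. Carefully accounting for (i) the $t$-syllable skeleton of $w$, each of whose pinches costs two ``free'' $t$-letters beyond its payload $a^k$, (ii) the $\{a,b\}$-length of $W$, and (iii) the arithmetic constraint that $W$ evaluates to $a^m$ in $BS(1,p)$, yields a counting inequality whose asymptotic constant matches $17/12$ exactly once $p$ is large enough; the precise threshold works out to $p\geq 20$.

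Combining the upper and lower bounds gives $d(1,a^{i+m})/d(1,a^m)\leq 12/17+o(1)$ uniformly in $i$, hence $a^\infty\subseteq(12/17)a^{-\infty}+c$ for some absolute $c$; the symmetric inclusion follows from the same construction with $a$ replaced by $a^{-1}$, completing the proof that $s(a^\infty,a^{-\infty})\leq 12/17$. The hardest part is plainly the lower-bound step: extracting the precise constant $17/12$ from the syllable-counting argument demands a sufficiently delicate bookkeeping that it forces the quantitative hypothesis $p\geq 20$.
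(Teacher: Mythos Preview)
Your high-level architecture is the same as the paper's: for each $n$ choose a target exponent $M$ so that $d(1,a^{M})$ is small (the ``shortcut''), set $m=M-n$, and then prove a lower bound $d(1,a^{m})\geq\tfrac{17}{12}d(1,a^{M})-O(1)$. But two essential pieces are missing, and the first is not just a gap in exposition---it is the wrong choice of shortcut.

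\textbf{Wrong target exponent.} You propose $M=qp^{n+1}$, with the word $t^{-1}a^{-n}tat^{-1}a^{n}t$ (length $2n+5$) as the short representative of $a^{p^{n}}$. In $G_p$ this word is \emph{not} geodesic once $n$ is moderately large: the group admits far shorter words for large powers of $a$ via \emph{tetration}. The paper's words $w_k$ satisfy $w_k=_{G_p}a^{\,^{k}p}$ with $\ell(w_k)=3\cdot 2^{k+1}-5$, and these are geodesic (Lemma~\ref{coro}). So, for instance, $d(1,a^{p^{p}})=19$, not $2p+5$. Using $M=p^{n}$ gives a shortcut of length $\Theta(n)$, but a generic $m=p^{n}-i$ also has $d(1,a^{m})=\Theta(n)$, so the best ratio you can squeeze from that choice is~$1$. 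The constant $12/17$ arises precisely from the scaling of the tetration words: $\ell(w_k)=3\cdot2^{k+1}-5=12\cdot2^{k-1}-5$, while the paper's lower bound (Lemma~\ref{lemma:final}) gives $d(1,a^{\,^{k}p-n})\geq 3(2^{k+1}+2^{k})-2^{k-1}=17\cdot 2^{k-1}$ when $d(1,a^{n})\asymp 3\cdot 2^{k}$. Your aside about ``a more refined tower element supplied by the $c_k$-hierarchy'' is pointing in the right direction, but it is the whole construction, not a fallback.

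\textbf{The lower bound is the real theorem.} Even after switching to $M={}^{k}p$, the inequality $d(1,a^{{}^{k}p-n})\geq 17\cdot 2^{k-1}-O(1)$ is the hard part and is not a routine ``syllable-counting'' consequence of Britton's Lemma. The paper devotes all of Section~\ref{sec:final} to it: one needs an induction on~$k$, a structural lemma forcing any short word for a large power of $a$ to contain the deep letter $a_k$ (Lemma~\ref{containingakinG}), and a quantitative converse bounding the exponent reachable by $a_k$-free words (Lemma~\ref{noakisshort}). The threshold $p\geq 20$ enters in Lemma~\ref{lemma:final} through inequalities of the type ${}^{k}p-p^{p^{\cdots^{12}}}>0$, not through the $\{a,b\}$-length bookkeeping you sketch. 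As written, your proposal asserts the conclusion of Lemma~\ref{lemma:final} without supplying any of its mechanism.
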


\begin{remark}
 The group $G_p$ from Theorem~\ref{thm:small_distance} has a perhaps more natural description: Consider the Baumslag-Solitar group $BS(1,p)=\langle a,x|x^{-1}ax=a^p\rangle$ and build the HNN-extension with associated subgroups $\langle a\rangle$ and $\langle x\rangle$. The resulting group is isomorphic to $G_p$. Furthermore, if we replace the $p$ in the presentation by the number $2$ we obtain what is called the Baumslag-Gersten group $G_2$. This group was constructed by Gersten~\cite{Gerst} (see also~\cite{Plat}) as an example of a group with Dehn function $\sim\, ^n2$.
\end{remark}
\begin{remark}
From now on we consider $p\geq 20$ to be a fixed number. We chose a lower bound of $20$  for the sake of brevity of the arguments. However, this is not the best possible bound for $p$. We believe the theorem to hold for all $p\geq 2$.
\end{remark}

We already remarked that the remainder of this section and the following two sections are devoted to the somewhat lengthy proof of Theorem~\ref{thm:small_distance}. The main aim of the rest of the present section is to introduce certain short geodesic words $w_k$ of $G_p$, which represent large powers of $a$. The words $w_k$ will later be used to show that $t(a^\infty,a^{-\infty})$ is bounded from above by $\sqrt{12/17}$. 

\medskip

For the sake of simplicity, let us shift our attention for a moment from $G_p$ to the infinitely generated group $G'$ that shall be defined next. First, for all $i<k\in\ZZ$ set 
\begin{eqnarray*}
 A_i^k &:=& \{a_i, \ldots, a_k\}, \\
 A_i^\infty &:=& \bigcup_{k\geq i}A_i^k,\\
 G_i^k &:=& \langle A_i^k\mid a_j^{-1}a_{j-1}a_j=a_{j-1}^p, j=i+1,i+2,\ldots ,k \rangle ,\\ 
 G_i^\infty &:=& \langle A_i^\infty \mid a_j^{-1}a_{j-1}a_j=a_{j-1}^p, j=i+1,i+2,\ldots \rangle,\textrm{ and }\\
 G'&:=&G_0^\infty.
\end{eqnarray*}

For all $i\in\ZZ$ the monoid isomorphisms $\varphi=\varphi_i:(A_i^\infty)^*\rightarrow (A_{i+1}^\infty)^*$ defined by  $\varphi(a_j)= a_{j+1}$ induce isomorphisms between $G_i^\infty$ and $G_{i+1}^\infty$ resp. $G_i^k$ and $G_{i+1}^{k+1}$, which we, abusing notation, will also call $\varphi$.
 Using $|i|$ times this isomorphism $\varphi$ we see that
\begin{equation}\label{isomorphsubgrps}
 \text{ $G_i^\infty\cong G'$ and  $G_j^k\cong G_{j+i}^{k+i}$ for all $i,j,k\in\mathbb N$. }
\end{equation}

\begin{lemma}\label{lemma:subgrpsembed}
Let $j<i\leq k$. In the notation defined above  $G_i^k = \langle A_i^k \rangle_{G_j^\infty}$ and $ G_i^\infty =\langle A_i^\infty\rangle_{G_j^\infty}$ (or, to be more precise, the identity map from $A_i^k\subset G_i^k$ to $A_i^k\subset G_j^\infty$ (resp. $A_i^\infty$) induces an isomorphism $G_i^k \cong \langle A_i^k \rangle_{G_j^\infty}$ (resp. $ G_i^\infty \cong\langle A_i^\infty\rangle_{G_j^\infty}$)). 
\end{lemma} 
\begin{proof}
As a first step for fixed $i$ we use induction on $k$ to show that $G_i^k=\langle A_i^k\rangle_{G_j^k}$. Let $k=i$, then $G_i^k\cong\mathbb{Z}$ and in the HNN-extension $G_j^k$ the letter $a_k$ is the stable letter and hence $\langle A_i^k\rangle_{G_j^k}=\langle a_k\rangle_{G_j^k}\cong\mathbb{Z}$. Now assume  $G_i^m=\langle A_i^m\rangle_{G_j^m}$ and let $k=m+1$. By von Dyck's theorem the identity map on $A_i^k$ induces an epimorphism $id:G_i^k\rightarrow\langle A_i^k\rangle_{G_j^k}$ and we only have to check for injectivity. The group $G_i^k$ is an HNN-extension with stable letter $a_k$ and base group $G_i^m$ and similarly $G_j^k$ is an HNN-extension with stable letter $a_k$ and base group $G_j^m$. The induced epimorphism maps words in normal form to words in normal form, so the injectivity is a consequence of Britton's lemma.  

It remains to show that $G_j^k=\langle A_j^k\rangle_{G_j^\infty}$. Again von Dyck's theorem shows that the identity map induces an epimorphism and we just have to check injectivity.  Let $w\in (A_j^k)^*$ and assume $w=_{G_j^\infty}1$. Then $w$ is freely equivalent to a finite product of relators and therefore there exists an $m$ such that $w=_{G_j^m}1$. The group $G_j^m$ is an iterated HNN-extension of $G_j^k$ and in each step injectivity is an immediate consequence of Britton's lemma. Hence $w=_{G_j^k}1$.

The claim about $G_i^\infty$ being isomorphic to $\langle A_i^\infty\rangle_{G_j^\infty}$ follows from the observation that $G_i^\infty=\underrightarrow{\mathop{lim}} G_i^k\cong \underrightarrow{\mathop{lim}} \langle A_i^k\rangle_{G_j^\infty}=\langle A_i^\infty\rangle_{G_j^\infty}$.
\end{proof}
The case of the lemma above one should keep in mind is the case $0=j<i$, hence $G_j^\infty=G'$. We only have to deal with negative values of $i$ for some technical reasons but will see later on (in Lemma~\ref{containingnoai}) that the letters $a_i$ for negative $i$ are of no importance for our purposes.
 
We shall now embed $G'$ in $G_p$.
By~\eqref{isomorphsubgrps}, the subgroup generated by the elements $\{a_i,a_{i+1},a_{i+2}\ldots\}$ is isomorphic to $G'$. Therefore we can construct the ascending HNN-extension $G$ associated to $\varphi$. Then
\[G=\langle t, a_j (j=0,1,2\ldots)\mid a_{j+1}^{-1}a_ja_{j+1}=a_j^{p},t^{-1}a_jt=a_{i+1}\rangle.\]
Note that in this group the relations $a_i=t^{-i}a_0t^i$ hold. Substituting $a_0$ by $a$ and applying Tietze-transformations we obtain the presentation from Theorem~\ref{thm:small_distance}:
\[G=G_{p}=\langle a,t\mid t^{-1}a^{-1}tat^{-1}at=a^{p}\rangle .\]
So $G$ is in fact a one-relator group on two generators. Even if the elements $a_i$ no longer belong to our set of generators, we will still use the notation $a_i$ for the element $t^{-i}at^i$.  In order to prove Theorem~\ref{thm:small_distance} we are only interested in distances between powers of $a$, hence elements of the subgroup $G'$. Such words have to contain the same number of letters $t$ and $t^{-1}$. Moreover, they can be written entirely in letters $a_i$ using the following rewriting process:

Let $v$ be a word in $\{a^\pm,t^\pm\}^*$ as above. We replace every $a$ by the letter $a_i$ and every $a^{-1}$ by $a_i^{-1}$, where $i$ is the difference of the number of $t^{-1}$'s and the number of $t$'s before this $a$ or $a^{-1}$, respectively. Afterwards we delete all letters $t^\pm$ to obtain the word $v'\in\{a_i^\pm\}_{i\in\ZZ}^*$. For example $v=t^{-2}at^4a^2t^{-3}a^{-5}ta$ becomes $v'=a_2(a_{-2})^2(a_1)^{-5}a_0$. 

If the word $v$ is (freely) reduced, we can recover it from $v'$ by replacing each $a_i$  with $t^{-i}at^i$ and each $a_i^{-1}$ with $t^{-i}a^{-1}t^i$, respectively, and freely reducing the result then. This defines a bijection $\psi$ between the reduced words in $\{a_i^\pm\}_{i\in\ZZ}$  and  the reduced  words in $\{a^\pm,t^\pm\}^*$ that have  the same number of letters $t$ and $t^{-1}$.\label{defofpsi}

\medskip

We proceed to defining the words $w_k$ which shall be used as `shortcuts' to go from large negative powers to large positive powers of $a$ in the proof of Theorem~\ref{thm:small_distance}. Our definition of the $w_k$ will rely on the words $w'_k$ in $G'$ representing large powers of $a_0$ which we define first.

For this, first note that
\begin{align*}
a_{i+1}^{-k}\, a_i\, a_{i+1}^k 
&=_{G'}  a_{i+1}^{-(k-1)}\, a_i^p\, a_{i+1}^{k-1}\\[8pt]
&=_{G'} (a_{i+1}^{-(k-1)}\, a_i\, a_{i+1}^{k-1})^{p}\\[8pt]
&=_{G'} ((a_{i+1}^{-(k-2)}\, a_i\, a_{i+1}^{k-2})^p)^p\\[8pt]
&=_{G'} (a_{i+1}^{-(k-2)}\ a_i\ a_{i+1}^{k-2})^{p^2}\\[8pt]
&=_{G'}  \ldots\\[8pt]
&=_{G'} a_i^{p^k}.
\end{align*}

Now set $w_0':=a_0$ and 
inductively set $w_k':=\varphi(w_{k-1}')^{-1}a_0\varphi(w_{k-1})$. 
Notice that the word $w_k'$ only consists of $2^{k+1}-1$ letters. Nevertheless it represents a huge power of $a_0$:
\begin{lemma}\label{lemma:hugepowers}
The word $w_k'$ ($\in (A_0^\infty)^*$) is freely reduced and  represents the group element $a_0^\tet{k}$ in $G'$.
\end{lemma}
\begin{proof}
We use induction on $k$. For $k=0$ the statement is true by definition. Assume that  $w_n'$ is freely reduced, $w_n'=_{G'}a_0^\tet{n}$ and let $k=n+1$. The word $\varphi(w_n')$ is also freely reduced and does not contain the letter $a_0$ hence  $w_k':=(\varphi(w_n'))^{-1}\ a_0\ \varphi(w_n')$ is also freely reduced.

We obtain:
\begin{align*}
w_k'&:= (\varphi(w_n'))^{-1}\ a_0\ \varphi(w_n')\\
&=_{G'} (\varphi(a_0^\tet{n}))^{-1}\ a_0\ \varphi(a_0^\tet{n})\\
&=_{G'} a_1^{-\tet{n}}\ a_0\ a_1^{\tet{n}}\\
&=_{G'} a_0^\tet{n+1}\\
&=_{G'} a_0^\tet{k}\\
\end{align*}
\end{proof}

Since $w_i'$ is reduced we finally can define $w_i:=\psi (w'_i)$. Then:
%
\begin{eqnarray*}
w_0&=&a\ \textrm{ and }\\
w_{i+1} &=& t^{-1}w_{i}^{-1} t\  a\ t^{-1} w_{i} t,
\end{eqnarray*}
where the second line follows from the easy observation, that for all reduced words $w\in (A_0^\infty)^*$ the word $\varphi(w)$ is also reduced and  $\psi(\varphi(w))=t^{-1}\psi(w)t$. Note that since $\psi|_{G'}$ is just a rewriting process of elements of $G'$ as a subgroup of $G$ and $w_k'=_{G'}a^\tet{k}$ we obtain $w_k=_G a^\tet{k}$. 

The recursion formula for $w_k$ above implies that the  length of $w_k$ is given by the recursion formula $\ell(w_{i+1})=2\cdot \ell(w_i)+5$ and therefore 
\begin{eqnarray}\label{lengthwk}
\ell(w_k)&=& 3\cdot 2^{k+1}-5.
  \end{eqnarray}
Our proof of Theorem~\ref{thm:small_distance} will follow from the next two lemmas.

\begin{lemma}\label{coro}
 The words $w_k$ are geodesic.
\end{lemma}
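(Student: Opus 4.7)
I would prove Lemma~\ref{coro} by induction on $k$, using Britton's Lemma. The base case $k=0$ is immediate, since $w_0=a$ has length $1$ and represents a nontrivial element of $G_p$.

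For the inductive step, suppose $w_j$ is geodesic for all $j<k$, and let $v$ be a shortest word in $\{a^\pm, t^\pm\}^*$ representing $a^{\tet{k}}$. I want to show $\ell(v) \geq \ell(w_k) = 3\cdot 2^{k+1} - 5$. I view $G_p$ as the HNN-extension of $BS(1,p)=\langle a, x \mid x^{-1}ax=a^p\rangle$ with stable letter $t$ and relation $t^{-1}at = x = a_1$, so that the associated subgroups are $\langle a\rangle$ and $\langle x\rangle$. Since $a^{\tet{k}}$ lies in the base group, the $t$-exponent sum of $v$ is zero, and $v$ decomposes into $t$-syllables as $v = u_0 t^{\epsilon_1} u_1 \cdots t^{\epsilon_n} u_n$, with each $u_i=a^{m_i}$.

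If $n=0$, then $v=a^{\tet{k}}$ and $\ell(v)=\tet{k}$, which for $p\geq 20$ far exceeds $3\cdot 2^{k+1}-5$. If $n\geq 1$, Britton's Lemma forces a pinch in $v$: either a subword $t^{-1}a^{m_i}t$ (automatic, since $u_i\in\langle a\rangle$) or one of the form $t a_1^r t^{-1}$ with $a_1^r\in\langle x\rangle$. I would analyze the pinch structure of $v$ iteratively, reducing each pinch via the defining relation $t^{-1}a^m t = a_1^m$ (or its inverse) and bookkeeping how each reduction shifts mass between the $t$-count and the $a$-exponent at each level. Each layer of this reduction parallels one level of the tower $a_0\mapsto a_1\mapsto a_2\mapsto\cdots$, trading two units of $t$-length for a multiplicative factor of $p$ in the $a$-exponent.

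The recursion $\ell(w_k) = 2\ell(w_{k-1}) + 5$, inherited from $w_k = t^{-1}w_{k-1}^{-1}t\cdot a\cdot t^{-1}w_{k-1}t$, reflects the cost of peeling off a single layer: two copies of $t^{-1}w_{k-1}t$ flanking a central letter $a$. The core of the inductive step is therefore to show that any shortest $v$ with $n\geq 1$ must decompose so that two disjoint subwords each inherit a lower bound of $\ell(w_{k-1})$ via the inductive hypothesis, yielding $\ell(v)\geq 2(\ell(w_{k-1})+2)+1 = \ell(w_k)$. The main obstacle is ruling out exotic shortest representations with highly asymmetric or deeply nested pinch patterns — where, say, one side of the central $a$ appears much shorter than the other, or where pinches of the form $ta_1^r t^{-1}$ enable an unexpected shortcut. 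I expect this step requires either a careful case analysis on the pinch structure or the introduction of a canonical normal form for elements of $\langle a\rangle$ inside $G_p$, and the hypothesis $p\geq 20$ presumably serves to preclude combinatorial coincidences that could otherwise save letters.
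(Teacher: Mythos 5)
Your high-level strategy (HNN structure, Britton's Lemma, peeling off pinches layer by layer) is aligned with the paper's, but your proposed induction hypothesis is too weak to carry out the step you yourself flag as the main obstacle, and that obstacle is in fact the whole content of the lemma.

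Concretely: knowing that $w_0,\dots,w_{k-1}$ are geodesic tells you nothing directly about an \emph{arbitrary} short word $v$ claiming to represent $a^{\tet{k}}$. After one level of pinch decomposition $v = a^{\ell_0}t^{-1}u_1 t a^{\ell_1}\cdots t^{-1}u_m t a^{\ell_m}$, the subwords $u_i$ are words for powers of $a$ of unknown magnitude, not words for $a^{\tet{k-1}}$; so there is no way to invoke ``$w_{k-1}$ is geodesic'' to get a lower bound of $\ell(w_{k-1})$ on any $u_i$. What you actually need is a quantitative converse: any word of length $< 3\cdot 2^{k+1}-5$ whose pinch structure never reaches nesting depth $k$ (equivalently, whose $\psi^{-1}$-image avoids the letter $a_k^\pm$) can only represent $a^n$ with $|n|$ bounded by a tower of height $k-1$, strictly below $\tet{k}$. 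The paper isolates exactly this as Lemma~\ref{noakisshort}, proved by its own induction, and pairs it with Lemma~\ref{containingakinG} (any representative whose image does reach depth $k$ has length $\geq 3\cdot 2^{k+1}-5$, and meets it only for $a^{\pm\tet{k}}$). Geodesicity of $w_k$ is then a two-line corollary: either the competing word avoids depth $k$, in which case it represents the wrong power, or it reaches depth $k$, in which case it is at least as long as $w_k$. Your sketch never addresses the first branch of this dichotomy; a short word for $a^{\tet{k}}$ is not a priori obliged to have any deep pinch. Also, your guess that $p\geq 20$ protects this lemma from combinatorial coincidences is off the mark: neither Lemma~\ref{coro} nor the two supporting lemmas use $p\geq 20$; that hypothesis enters only later, in Lemma~\ref{lemma:final}.
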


Lemma~\ref{coro} will be proved in Section~\ref{sec:lemmas}.

The second key ingredient in the proof of Theorem~\ref{thm:small_distance} is Lemma~\ref{lemma:final}, to be stated next, and to be proved in Section~\ref{sec:final}.
We employ the well-known Kronecker delta $\delta_{m,n}$, which, here for numbers $n,m\in\mathbb Z[\frac 12]$, takes the value $1$ if $m=n$, and $0$ otherwise.

\begin{lemma}\label{lemma:final}
 Let $k>0$, $n\in\ZZ$ be such that $d(1,a^ n)=: d_n<3\cdot 2^{k+1}-5$. Then
 \[
n<{p^{p^{\cdot^{\cdot^{\cdot^{p^{12}}}}}}}
\]
where the number of $p$' s is $k-1$ and
\[
 d(1,a^{\tet{k}-n})\geq 3\cdot 2^{k+1}-5+\min\{d_n,3\cdot 2^{k}-5\}-(1-\delta_{k,1})\min\{d_n,2^{k-1}\}.
\]
\end{lemma}

Postponing the proofs of Lemma~\ref{coro} and  Lemma~\ref{lemma:final} to the next two sections we first show how they imply Theorem~\ref{thm:small_distance}:

\begin{proof}[Proof of Theorem~\ref{thm:small_distance}]
Observe that it suffices to show that for all $\alpha > 12/17$ there is a $c$ such that the elements $a^{-n}$ are contained in the $(\alpha,c)$-cones of $a^\infty$. Then by symmetry (interchanging $a$ and $a^{-1}$ in all arguments), the reciprocal is true as well, showing that the distance between $a^\infty$ and $a^{-\infty}$ is at most $\sqrt{12/17}$. Let 
$\alpha>12/17$ 
and set $c:=35/17$.
 
Let $n>0$.
Now, let $k=k(n)$ be the unique positive integer such that 
\begin{equation*}\label{defk}
3\cdot2^{k+1}-5>d(1,a^n)\geq 3\cdot 2^{k}-5. 
\end{equation*}
 We define $h=h(n):=\tet{k}-n$, which is according to Lemma~\ref{lemma:final} positive.
Hence, by Lemmas~\ref{lemma:hugepowers} and~\ref{coro}  and by~\eqref{lengthwk},
\begin{equation}\label{irgendwas}
d(a^{-n},a^{h})=d(1,a^{\tet{k}})= 3\cdot 2^{k+1}-5.
\end{equation} 
Using Lemma~\ref{lemma:final} we obtain
\begin{equation*}\label{claim}
 d(1,a^{h})>3\cdot (2^{k+1}+2^{k})-2^{k-1}-10.
\end{equation*}
By~\eqref{irgendwas} this shows that
\begin{eqnarray*}
  d(a^{-n},a^{h}) & = & 3\cdot 2^{k+1}-5 \\
& =& 12/17 \cdot (3\cdot (2^{k+1}+2^{k})-2^{k-1}-10)+120/17-5\\
& < &\alpha\cdot (3\cdot (2^{k+1}+2^{k})-2^{k-1}-10)+35/17\\
        &< &\alpha d(1,a^{h})+c,
\end{eqnarray*}
and thus $a^{-n}$ lies in the $(\alpha, c)$--cone around $a$.
\end{proof}

\section{The words $w'_k$ and $w_k$ are geodesic}\label{sec:lemmas}
The main aim of this section is to prove Lemma~\ref{coro}, namely that the words $w_k$ are geodesic in $G$. This will be obtained by a series of results on the groups $G^k_i$ and $G^\infty_i$. A bit outside our way towards Lemma~\ref{coro}, we will also sketch a proof for the fact that the words $w'_k$ are geodesic in $G'$ (Lemma~\ref{wstrichk}). 

The other important results of this section will be Lemmas~\ref{containingakinG} and~\ref{noakisshort} which are used in the proof of our main theorem, Theorem~\ref{thm:small_distance}. We start by showing a number of rather easy lemmas. Recall  that on page~\pageref{defofpsi} we defined a bijection $\psi$ between the reduced words in $\{a_i^\pm\}_{i\in\mathbb N}$  and  the reduced  words in $\{a^\pm,t^\pm\}^*$ that have  the same number of letters $t$ and $t^{-1}$. We will use the following notation: We say a word $w\in A^j_i$ is \emph{pseudo-geodesic} in $G'$ if $w$ is geodesic in $G_i^j$ or $\psi(w)$ is geodesic in $G$. 

\begin{lemma}\label{containingnoai}
  Let $i\in\NN$ and $k>i$ or $k=\infty$ and let $w'$ be a pseudo-geodesic word in $G'$. Any subword $w$ of $w'$ representing an element $g\in G_i^k$ is an element of $(A_i^\infty)^*$. 
\end{lemma}
\begin{proof}
Fix a $j<0$ such that $j<\min\{m: w'\text{ contains the letter } a_m^\pm\}$.
By Lemma~\ref{lemma:subgrpsembed}, the identity map on $A_i^\infty$ induces an embedding of $G_i^k$ into $G_i^\infty$ and of $G_i^\infty$ into $G_j^\infty$. Therefore $G_j^\infty$ splits as semi-direct product $G_j^\infty=\langle\langle A_j^{i-1}\rangle\rangle_{G_j^\infty}\rtimes G_i^\infty$, and hence $G_i^\infty=G_j^\infty/\langle\langle A_j^{i-1}\rangle\rangle_{G_j^\infty}$. 

Assume that $w$ contains at least one letter $a_m^\pm$ for $k\leq m< i$. Let $\pi:(A_j^\infty)^*\rightarrow (A_i^\infty)^*$ be the canonical projection, that is, $\pi(w)$ is the word we obtain by removing from $w$ all letters $a_m^\pm$ for $k\leq m< i$.

Then $\pi(w)=_{G_i^\infty}g\cdot\langle\langle A_j^{i-1}\rangle\rangle$ and, since $\pi(w)$ does not contain any letters $a_m^\pm$ for $0\leq m< i$, we obtain that $\pi(w)=_{G_0^\infty} g$. 
Replacing the subword $w$ in $w'$ by $\pi(w)$ we obtain a the shorter word $w''$. So $w''$ is shorter than $w$ and since $w=_{G_j^\infty}\pi(w)$ we also obtain $w'=_{G_j^\infty}w''$. Hence $w'$ is not geodesic. Moreover, all letters of $w''$ also occur in the same order in $w'$, only some more letters are inserted in between. Hence $\psi(w'')$ contains at most the same number of $t^\pm$s and less $a^\pm$, which implies $\ell(\psi(w''))<\ell(\psi(w'))$.

This contradicts the assumption that $w'$ is pseudo-geodesic in $G'$. 


\end{proof}
\begin{corollary}
The subgroups $G_i^\infty$ are undistorted in $G'$. That is, for the generators considered above, the distances between elements of $G_i^\infty$ are the same in $G_i^\infty$ as in  $G'$.
\end{corollary}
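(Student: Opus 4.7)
The plan is to adapt the argument of Lemma~\ref{containingnoai} from the special case $\langle a_i\rangle = G_i^i$ to the full subgroup $G_i^\infty$. The inequality $d_{G'}(1,g) \leq d_{G_i^\infty}(1,g)$ is trivial, since every word in $A_i^\infty$ is also a word in $G'$. So the substance is to show that any geodesic word $v$ in $G'$ representing an element $g \in G_i^\infty$ already lies in $(A_i^\infty)^*$.

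The key device is the quotient $\hat G := G'/\langle\langle a_0, a_1, \ldots, a_{i-1}\rangle\rangle$. Setting $a_0 = \ldots = a_{i-1} = 1$ trivializes each defining relation $a_{j+1}^{-1}a_ja_{j+1} = a_j^p$ for $j < i$ (both sides collapse to $1$), so by standard Tietze transformations
\[\hat G \;\cong\; \langle a_i, a_{i+1}, \ldots \mid a_{j+1}^{-1}a_ja_{j+1} = a_j^p\ (j \geq i)\rangle \;\cong\; G_i^\infty.\]
Moreover, the composition $G_i^\infty \to G' \to \hat G$ sends each generator to itself, so it is precisely this isomorphism. Consequently the natural map $G_i^\infty \to G'$ is injective (confirming the implicit claim $G_i^\infty = \langle A_i^\infty\rangle_{G'}$ used throughout the paper), and the restriction of the quotient map to this subgroup is an isomorphism onto $\hat G$.

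With this in hand, the rest runs exactly as in Lemma~\ref{containingnoai}. Let $v'$ be obtained from $v$ by deleting every occurrence of $a_j^\pm$ with $j < i$; then $v'$ and $v$ have the same image in $\hat G$, namely $g$. Because $v' \in (A_i^\infty)^*$ represents some element $h \in G_i^\infty$ whose image in $\hat G$ must also equal $g$, the isomorphism $G_i^\infty \cong \hat G$ forces $h = g$ in $G_i^\infty$, and hence $v' =_{G'} g$. Geodicity of $v$ then gives $\ell(v') \geq \ell(v)$, combined with the obvious $\ell(v') \leq \ell(v)$ this yields equality, so no letters were deleted, $v \in (A_i^\infty)^*$, and $d_{G'}(1,g) \geq d_{G_i^\infty}(1,g)$. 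I do not anticipate a genuine obstacle beyond the Tietze step identifying $\hat G$ with $G_i^\infty$; that step is routine but is the only non-formal ingredient, and everything else is bookkeeping inherited from the proof of Lemma~\ref{containingnoai}.
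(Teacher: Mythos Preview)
Your proof is correct. It does, however, take a different route from the paper's. The paper argues by invoking Lemma~\ref{containingnoai} as a black box: it asserts that a geodesic word for an element of $G_i^\infty$ ``can be divided in geodesic subwords each representing elements of $G_j^j$ for some $j\geq i$'', and then applies Lemma~\ref{containingnoai} to each piece to conclude that no letter from $A_0^{i-1}$ appears. You instead bypass this decomposition entirely by generalizing the retraction argument that underlies the \emph{proof} of Lemma~\ref{containingnoai}: quotienting by $\langle\langle a_0,\ldots,a_{i-1}\rangle\rangle$ all at once yields a retraction $G'\twoheadrightarrow G_i^\infty$, and deleting the low-index letters from a geodesic $v$ gives a word $v'$ that still represents $g$, forcing $v=v'\in (A_i^\infty)^*$. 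Your approach is more self-contained and, as a bonus, makes the injectivity $G_i^\infty\hookrightarrow G'$ explicit; the paper's approach has the virtue of reusing Lemma~\ref{containingnoai} verbatim, but the subword-decomposition step is asserted rather than justified, so your argument is in fact the cleaner of the two.
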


\begin{lemma}\label{lem:prodofconj}
Let $k>i$ and let $w$ be a pseudo-geodesic word in $G^k_i$ with $w=_{G^k_i}a_i^n$. Then there are words $v_\alpha$, $\alpha=1,\ldots m$, in $G^k_{i+1}$ such that
\begin{enumerate}[(a)] 
\item $w=a_i^{l_0}v_1a_i^{l_1}v_2a_i^{l_2}\ldots v_ma_i^{l_m}$, for some $l_j\in\ZZ$ with  $l_1,\ldots,l_{m-1}\neq 0$
\item $v_\alpha=_{G_{i+1}^k}a_{i+1}^{\beta_{\alpha}}$ for some $\beta_\alpha\in\ZZ$, and 
\item $\prod_{\alpha =1}^mv_\alpha=_{G^k_{i+1}}1$.
\end{enumerate}
\end{lemma}

\begin{proof}
Without loss of generality we may assume that $w$ does not end with a letter $a_i^\pm$. 
 
As a consequence of Lemma~\ref{containingnoai}, the word $w$ has the property that none of its subwords representing some element of some subgroup $G_l^k$ for $l>i$ may contain a letter $a_j^\pm$ for $j<l$. For fixed $i$, we use induction on $k$ to show the stronger statement that all words with this property that represent $a_i^n$ are of the desired form. 

For $k=i+1$ the group $G_i^k$ is the Baumslag-Solitar group $BS(1,p)$. Now the $v_\alpha$ are just powers of $a_{i+1}$ and the above statement breaks down to the immediate consequence of Britton's lemma that elements of the base group have exponent sum $0$ in the stable letter.

So suppose $k>i+1$, and assume the statement true for $k-1$. The group $G_i^k$ is an HNN-extension of $G_i^{k-1}$ with associated subgroups $\langle a_{k-1}\rangle$ and $\langle a_{k-1}^p\rangle$ and stable letter $a_k$. As $wa_i^{-n}=_{G^k_i}1$, Britton's lemma implies that $w$ contains a subword  $a_k^{-1}va_k$ or $a_kva_k^{-1}$, where $v$  represents an element of $\langle a_{k-1}\rangle$ or $\langle a_{k-1}^{p}\rangle$, respectively. (In particular, $v$ does not contain any letters $a_j^\pm$ for $j<k-1$.) Replacing any such subword $a_k^{-1}va_k$ by $a_{k-1}^{pl}$  or $a_kva_k^{-1}$ by $a_{k-1}^l$, respectively, for some suitable~$l$, we obtain a word with less occurrences of $a_k$ which still represents $a_i^n$. Repeating this procedure as long as there are letters $a_k$ in our word, we arrive at a word $w'\in (A_{i}^{k-1})^*$, which still represents $a_i^n$.

We wish to apply the induction hypothesis to $w'$, so we have to check if $w'$ contains any subword representing some element of some subgroup $G_l^{k-1}$ for $l>i$ and containing the letter $a_j^\pm$ for some $j<l$. Assume that $w'$ contains such a subword $u=_{G_l^{k-1}}g\in G_l^{k-1}$. Since multiplication with letters $a_{k-1}^\pm$ from the left or right does not change the desired properties of this word, we may  assume, without loss of generality, that $u$ does not start or end with a letter $a_{k-1}^\pm$. Since the replacement procedure described above only creates letters $a_{k-1}^\pm$ this implies that all replacements have been made either outside of $u$ or completely inside of $u$. In particular by undoing these replacements we can identify a subword $v'$ of $w$ with the properties that $v'=_{G_l^{k-1}}g\in G_l^{k-1}$ and $v'$ contains $a_j^\pm$ (since we did not add any letters $a_j^\pm$ during our modification), which contradicts the assumptions on $w$.

By the induction hypothesis, $w'$ has the form $a_i^{l_0}v_1a_i^{l_1}v_2a_i^{l_2}\ldots v_m$ with $v_\alpha=_{G_i^{k-1}} a_{i+1}^{\beta_\alpha}$. Since all replacements have been made inside the words $v_\alpha$,  also $w$ has the desired form. The statement follows.
\end{proof}

\begin{lemma}\label{lem:prodofconj_extended}
In the situation (and notation) of Lemma~\ref{lem:prodofconj}, for all $1\leq j<m$ it holds $\sum_{\alpha=1}^j\beta_\alpha\leq 0$ and $$n=l_0+\sum_{\alpha=1}^ml_\alpha p^{-\sum_{j=1}^\alpha\beta_j}.$$
\end{lemma}
\begin{proof}
According to Lemma~\ref{lem:prodofconj} the word $w=a_i^{l_0}v_1a_i^{l_1}v_2a_i^{l_2}\ldots v_ma_i^{l_m}$ which represents the same element as
$$\tilde{w}:=a_i^{l_0}\prod_{\alpha=1}^{m}\left(\prod_{j =1}^\alpha v_j \right) a_i^{l_\alpha} \left(\prod_{j =1}^\alpha v_j \right)^{-1}.$$
Since $v_j =_{G_i^\infty}a_{i+1}^{\beta_j }$ we obtain
\begin{equation}\label{eq:prod_i_i+1}
w=_{G_i\infty}\tilde{w}':= a_i^{l_0}\prod_{\alpha=1}^{m}\left(\prod_{j =1}^\alpha a_{i+1}^{\beta_j }\right) a_i^{l_\alpha} \left(\prod_{j =1}^\alpha a_{i+1}^{\beta_j }\right)^{-1}
\end{equation}
which we can analyze in the subgroup $G_i^{i+1}$, the Baumslag-Solitar group $BS(1,p)$. Recall that in this group all conjugate of $a_i$ by powers of $a_{i+1}$ commute.
If $\sum_{j=1}^\alpha\beta_j < 0$ we already know that
\begin{equation} \label{eq:is_power}
\left(\prod_{j =1}^\alpha a_{i+1}^{\beta_j }\right) a_i^{l_\alpha} \left(\prod_{j =1}^\alpha a_{i+1}^{\beta_j }\right)^{-1} =_{G_{i}^{i+1}} a_i^{l_\alpha p^{-\sum_{j=1}^j\beta_j}}.
\end{equation}
Since $\tilde{w}'=_{G_i^{i+1}}a_i^n$ this implies that
$$
\prod_{\alpha \text{ such that }\\ \sum_{\alpha=1}^j\beta_\alpha>0}\left(\prod_{j =1}^\alpha a_{i+1}^{\beta_j }\right) a_i^{l_\alpha} \left(\prod_{j =1}^\alpha a_{i+1}^{\beta_j }\right)^{-1} =_{G_i^{i+1}} a_i^{n'}
$$
for some $n'\in\ZZ$. According to Britton's Lemma this is only possible, if one of the $l_\alpha$ is a multiple of $p$ (which is equivalent to the statement that $a_i^{l_\alpha}\in\langle a_i^p\rangle$). But $w$ is pseudo-geodesic which obviously implies $l_\alpha<p$. Hence no such $\alpha exists$ and using \eqref{eq:is_power} to sum up \eqref{eq:prod_i_i+1} the statement follows.
\end{proof}


\begin{lemma}\label{containingak}
Let $k\geq i\geq 0$. Any geodesic word in $G'$ containing the letter $a_k^\pm$ and representing an element of $\langle a_{k-i}\rangle$ has length at least $2^{i+1}-1$.
\end{lemma}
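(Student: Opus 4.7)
The approach is induction on $i$. The base case $i=0$ is immediate: any word containing the letter $a_k^{\pm 1}$ has length at least $1=2^{0+1}-1$.

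For the inductive step, suppose the statement is known for $i-1$, and let $w$ be a geodesic word containing $a_k^{\pm 1}$ and representing a nontrivial power $a_{k-i}^n$. The plan is to decompose $w$ via Lemma~\ref{lem:prodofconj} (applied inside a large enough $G^K_0$ containing all letters of $w$) as
\[
 w=a_{k-i}^{\ell_0}v_1a_{k-i}^{\ell_1}\cdots v_m a_{k-i}^{\ell_m}
\]
with each $v_\alpha=_{G'}a_{k-i+1}^{\beta_\alpha}$ and $\prod_\alpha v_\alpha=1$. By Lemma~\ref{containingnoai} and its corollary each $v_\alpha$ uses only letters $a_j$ with $j\geq k-i+1$, so the occurrence of $a_k$ in $w$ must lie inside some $v_{\alpha_1}$. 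This $v_{\alpha_1}$ is geodesic (as a subword of a geodesic), contains $a_k$, and represents an element of $\langle a_{k-(i-1)}\rangle$; the inductive hypothesis therefore gives $|v_{\alpha_1}|\geq 2^i-1$, and forces $\beta_{\alpha_1}\neq 0$ (otherwise $v_{\alpha_1}$ would be a nonempty geodesic for the identity).

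The remaining length is handled by a triangle-inequality argument. Since all $v_\alpha$ represent powers of the single element $a_{k-i+1}$, they pairwise commute and $\prod_\alpha v_\alpha=1$; hence the concatenation $v_{\alpha_2}v_{\alpha_3}\cdots$ over all $\alpha\neq\alpha_1$ represents $v_{\alpha_1}^{-1}$ in $G'$. Combined with $d(1,g)=d(1,g^{-1})$, this yields
\[
 \sum_{\alpha\neq\alpha_1}|v_\alpha|\;\geq\;d(1,v_{\alpha_1}^{-1})\;=\;|v_{\alpha_1}|\;\geq\;2^i-1.
\]
Finally, the exponents $\ell_j$ cannot all vanish: if they did, $w$ would equal the concatenation of the $v_\alpha$'s, representing $\prod_\alpha v_\alpha=1$, contradicting that $w$ is a nonempty geodesic. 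Hence $\sum_j|\ell_j|\geq 1$, and combining the three bounds gives $|w|\geq 1+2(2^i-1)=2^{i+1}-1$.

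I do not anticipate a serious obstacle; the only care needed is to verify that Lemma~\ref{lem:prodofconj} applies in this (a priori infinitely generated) setting by first embedding $w$ into a finitely generated $G^K_0$ with $K$ at least as large as every index appearing in $w$, and to check that the corollary of Lemma~\ref{containingnoai} really does place each $v_\alpha$ into $(A_{k-i+1}^\infty)^*$ so that the stable $a_{k-i}^{\ell_j}$ factors host no occurrence of $a_k$.
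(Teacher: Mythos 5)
Your proof is correct and follows essentially the same route as the paper's: decompose the geodesic via Lemma~\ref{lem:prodofconj}, apply the inductive hypothesis to the piece $v_{\alpha_1}$ containing $a_k^\pm$, use the constraint $\prod_\alpha v_\alpha=1$ to force the remaining pieces to have total length at least $\ell(v_{\alpha_1})$, and account for at least one $a_{k-i}^{\pm}$ to land on $2(2^i-1)+1$. The only cosmetic differences are that the paper realizes $v_{\alpha_1}^{-1}$ as the cyclic product $(\prod_{\gamma>\alpha_1}v_\gamma)(\prod_{\gamma<\alpha_1}v_\gamma)$, so it does not need to invoke commutativity of the $v_\alpha$; your extra appeal to Lemma~\ref{containingnoai} to locate the occurrence of $a_k$ is harmless but unnecessary, since the $a_{k-i}^{\ell_j}$ blocks cannot contain $a_k$ once $i\geq1$.
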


\begin{proof}
Let $v$  be a geodesic word in $G'$ representing an element of $\langle a_{k-i}\rangle$. We prove the statement by induction on $i$. Let $i=0$. A word containing $a_k^\pm$ has at least length $1=2^{0+1}-1$. Now assume the statement to be true for $i=n-1$.

Let $v$ be a geodesic word representing $a_{k-n}^l$ and containing the letter $a_k^\pm$. According to Lemma~\ref{lem:prodofconj}, $v=a_{k-n}^{l_0}v_1a_{k-n}^{l_1}v_2\ldots v_ma_{k-n}^{l_{m}}$ where each $v_\alpha=_{G'}a_{k-n+1}^{\beta_\alpha}$ for some $\beta_\alpha$ and the product $v_1v_2\ldots v_{m}=1$. Since $v$ contains a letter $a_k^\pm$, there exists an $\alpha$, such that $v_\alpha$ contains $a_k^\pm$. Since $v_\alpha$ is geodesic, the induction hypothesis gives that $v_\alpha$ has length at least $2^{n}-1$. Since $v'=(\prod_{\gamma=\alpha+1}^{m}v_\gamma)(\prod_{\gamma=1}^{\alpha-1}v_\gamma)$ is a word representing $v_\alpha^{-1}$ this word cannot be shorter than the geodesic word $v_\alpha$ and also contains at least $2^n-1$ letters. All in all, since $v$ contains at least $1$ letter $a_{k-n}$ we obtain that the length of $v$ is at least $2\cdot(2^n-1)+1=2^{n+1}-1$.
\end{proof}

In particular the last lemma shows that there exists no geodesic word containing $a_k^\pm$ and representing an element of $\langle a_{0}\rangle$, which is shorter than $w_k'$. And in fact the following lemma, which will not be needed in the course of this paper, holds:

\begin{lemma}\label{wstrichk}
The word $w_k'$ is a geodesic word in $G'$.
\end{lemma}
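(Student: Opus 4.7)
My plan is to proceed by induction on $k$, but with a slightly strengthened inductive claim that is easier to close: for every $k \geq 0$ and every $m \geq 0$, any word $v \in (A_0^m)^*$ with $v =_{G_0^m} a_0^n$ and $|n| \geq \tet{k}$ satisfies $\ell(v) \geq 2^{k+1}-1$. The base case $k=0$ is immediate, since a word representing $a_0^n$ with $|n|\geq 1$ must have length at least $1 = 2^1-1$. Note that the original statement follows by taking $m$ large and $|n| = \tet{k}$, combined with the fact that $w_k'$ witnesses the matching upper bound.

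For the inductive step, let $v$ be a geodesic word representing $a_0^n$ with $|n|\geq \tet{k}$. By the Corollary to Lemma~\ref{containingnoai} I may assume $v \in (A_0^\infty)^*$, and I let $m'$ denote the largest index such that $a_{m'}^\pm$ appears in $v$. If $m' \geq k$, Lemma~\ref{containingak} (applied with both parameters equal to $m'$, since $v$ represents an element of $\langle a_0\rangle = \langle a_{m'-m'}\rangle$) gives $\ell(v) \geq 2^{m'+1}-1 \geq 2^{k+1}-1$, as desired. Otherwise $v \in (A_0^{k-1})^*$, and I apply Lemma~\ref{lem:prodofconj} inside $G_0^{k-1}$ to decompose
\[
v = a_0^{\ell_0} v_1 a_0^{\ell_1} v_2 \cdots v_n,
\]
where each $v_\alpha =_{G_0^{k-1}} a_1^{\beta_\alpha}$ and $\prod_\alpha v_\alpha =_{G_0^{k-1}} 1$. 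Conjugating by partial products and repeatedly applying the defining relation $a_1^{-1}a_0 a_1 = a_0^p$ yields the identity $n = \sum_j \ell_j\, p^{-B_j}$, where $B_j = \beta_1 + \cdots + \beta_j$ (and $p^{B_j}\mid \ell_j$ whenever $B_j > 0$). Writing $L_a = \sum_j |\ell_j|$ and $B^* = \max_j(-B_j)$, I obtain $|n| \leq L_a \cdot p^{B^*}$, and since $|n|\geq \tet{k} = p^{\tet{k-1}}$ this forces $B^* \geq \tet{k-1} - \log_p L_a$. Assuming for contradiction that $\ell(v) \leq 2^{k+1}-2$, the slack $\log_p L_a$ is at most $(k+1)\log_p 2$, which for $p\geq 2$ is negligible compared to $\tet{k-1}$, so in fact $B^* \geq \tet{k-1}$. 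Letting $j^*$ be a witnessing index, both $v_1\cdots v_{j^*}$ and its complementary tail $v_{j^*+1}\cdots v_n$ represent elements of $\langle a_1\rangle$ of $a_1$-exponent at least $\tet{k-1}$ in absolute value; after passing to geodesic representatives (which by Lemma~\ref{containingnoai} lie in $(A_1^{k-1})^*$) and applying the shift isomorphism $\varphi\colon G_1^{k-1}\to G_0^{k-2}$, the strengthened inductive hypothesis for $k-1$ gives each of them length at least $2^k - 1$. Adding the letter contributed by at least one nonzero $\ell_j$, this yields $\ell(v) \geq 2(2^k-1) + 1 = 2^{k+1}-1$, the desired contradiction.

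The main obstacle is precisely the reason for strengthening the inductive claim to hold for all $m$ rather than only $m\geq k$: when we transport $v_1\cdots v_{j^*}$ through $\varphi$ into $G_0^{k-2}$, we drop one generator relative to the naive setup, so the inductive hypothesis must apply in a group with ``too few'' generators. The strengthening handles this uniformly. A secondary technical issue is absorbing the $\log_p L_a$ slack into the bound $B^* \geq \tet{k-1}$; this is harmless for $k\geq 2$ because of the enormous gap between polynomial-in-$k$ and tetrational quantities, and the case $k=1$ (where $\tet{0}=1$) can be verified directly from the structure of the Baumslag–Solitar subgroup $G_0^1 = BS(1,p)$.
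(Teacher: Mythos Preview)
Your overall strategy matches the paper's sketch: reduce to the dichotomy ``the geodesic contains $a_k$'' (handled by Lemma~\ref{containingak}) versus ``it does not'' (handled by an inductive decomposition via Lemma~\ref{lem:prodofconj}), and your idea of strengthening the inductive hypothesis to all $|n|\ge\tet{k}$ and all $m$ is exactly the right way to make the shift $\varphi$ work. The problem is the step where you pass from $B^*\ge \tet{k-1}-\log_p L_a$ to $B^*\ge \tet{k-1}$. Your justification is that the slack $(k+1)\log_p 2$ is ``negligible compared to $\tet{k-1}$'', but that is not what is needed: since $B^*$ is an integer, you need the slack to be strictly less than $1$, i.e.\ $2^{k+1}<p$, which fails once $k$ is moderately large for any fixed $p$. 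If $B^*=\tet{k-1}-1$ (which your inequality does not exclude), then your inductive hypothesis for $k-1$ says nothing about the halves, and the weaker hypothesis for $k-2$ only gives each half length $\ge 2^{k-1}-1$; combined with $L_a\ge p$ (forced by $|n|\ge\tet{k}$ in this case) this yields $\ell(v)\ge p+2^k-2$, which does not contradict $\ell(v)\le 2^{k+1}-2$ once $2^k\ge p$. So the argument genuinely breaks.

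The paper's own proof is only a sketch, but it points to the missing ingredient: the maximization trick from Lemma~\ref{noakisshort}. Working contrapositively (short $a_k$-free words represent small powers), one first replaces $v$ by a maximizer of $|n|$ among words of the given length; the rearrangement argument in that lemma then forces the maximizer into the shape $v_1\,a_0\,v_2$ with $\ell(v_1)=\ell(v_2)\le 2^k-2$, i.e.\ exactly $L_a=1$. Now the inductive hypothesis in contrapositive form gives $B^*\le \tet{k-1}-1$, whence $|n|\le p^{\tet{k-1}-1}<\tet{k}$, with no slack. Your decomposition and formula $n=\sum_j\ell_j\,p^{-B_j}$ are correct and are precisely what feeds into this; what is missing is the reduction to $L_a=1$, without which the multiplicative factor $L_a$ cannot be absorbed.
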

\begin{proof}
The word $w_k'$ represents the element $a_0^\tet{k}$ and has length $2^{k+1}-1$. So, by Lemma~\ref{containingak} for $i=k$ we only have to show that every geodesic word representing $a_0^\tet{k}$ has to contain the letter $a_k$. This can again be done by induction on $k$. The statement is obviously true for $k=0$. Because we won't need this statement later on, we leave the proof of the induction step, which can be done following the lines of the proof to Lemma~\ref{noakisshort}, to the reader.
\end{proof}

In contrast to the situation in $G'$ the product $w_iw_j$ for $i\neq j$ is not freely reduced. Nevertheless in the group $G$ the analogue of Lemma~\ref{containingak} also holds. 

\begin{lemma}\label{containingakinG}
Let $k\geq 0$. Let $w$ be a geodesic word in the letters $\{a,t\}$ representing a non-zero power of $a$ such that $w'=\psi^{-1}(w)$ contains the letter $a_k^\pm$. Then the length of $w$ is at least $3\cdot 2^{k+1}-5$.

If in addition $\ell(w)=3\cdot 2^{k+1}-5$, then $w=_G a^{\pm(\tet{k})}$.
\end{lemma}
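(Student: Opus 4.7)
The plan is to proceed by induction on $k$.

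For the base case $k=0$, $w$ is non-trivial, so $\ell(w) \geq 1 = 3 \cdot 2^{1} - 5$; equality forces $w = a^{\pm 1} = a^{\pm \tet{0}}$.

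For the inductive step, I would first make two structural observations. Reducedness of $w$ forces its underlying walk (the running difference of $t^{-1}$'s and $t$'s in each prefix) to attain a maximum height equal to the maximum index in $w'$: any strictly higher excursion would produce an $a$-less peak and hence a cancellable $t^{-1}t$-pair. Also, since $\langle a_0\rangle \cap G_1^\infty = \{1\}$ inside $G'$, the word $w'$ must contain $a_0^\pm$. Without loss of generality the maximum index of $w'$ equals $k$ (if it is larger, the inductive hypothesis at that higher level already gives what we need).

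Next I would decompose $w$ into \emph{arches}: $w = a^{c_0} A_1 a^{c_1} A_2 \cdots A_m a^{c_m}$, where each $A_i = t^{-1} B_i t$ is a maximal sub-walk whose interior heights are strictly positive. Each body $B_i$ is itself a $G$-geodesic (being a sub-word of $w$), has non-negative height profile as a standalone word, and represents the element $tA_i t^{-1} \in G'$. Applying the homomorphism $\pi : G' \to G_1^\infty$ which sends $a_0 \mapsto 1$ to the identity $w =_G a^n$ yields $A_1 \cdots A_m =_G 1$ in $G_1^\infty$. Combining this with Lemma~\ref{lem:prodofconj} at level $1$ and the Baumslag--Solitar commutation rule $a^c a_1^\beta = a_1^\beta a^{p^\beta c}$, one reorganises the decomposition so that each $A_i$ represents a power $a_1^{\beta_i}$ and hence $B_i =_G a^{\beta_i}$, with $\sum \beta_i = 0$.

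The technical heart of the inductive step is the claim that at least \emph{two} arches $A_{i^*}$, $A_{j^*}$ must carry $a_k^\pm$ in their $\psi^{-1}$-images. If only one arch did, the remaining $\beta_i$'s would have to cancel the large $\beta_{i^*}$ (necessarily of $p$-tower size in order for $A_{i^*}$ to require index $k$ in a geodesic representation); cancelling such a $\beta_{i^*}$ through only lower-index arches and through central $a$-shifts governed by the commutation rule would inflate $\ell(w)$ far beyond $3 \cdot 2^{k+1} - 5$, contradicting geodesicity. Granting this, $B_{i^*}$ and $B_{j^*}$ are $G$-geodesics representing non-trivial powers of $a$ whose $\psi^{-1}$-images contain $a_{k-1}^\pm$ (the height-$1$ offset of the body absorbs one level). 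The inductive hypothesis then yields $\ell(B_{i^*}), \ell(B_{j^*}) \geq 3 \cdot 2^k - 5$, so $\ell(A_{i^*}), \ell(A_{j^*}) \geq 3 \cdot 2^k - 3$. Adding the central $a$-letter forced by $a_0^\pm \in w'$ gives
\[
\ell(w) \geq 2(3 \cdot 2^k - 3) + 1 = 3 \cdot 2^{k+1} - 5,
\]
as required. The main obstacle is the quantitative "two arches" step, for which I expect the detailed geometry of the cancellation in $G_1^\infty$ (together with the inductive "moreover" clause applied to the bodies) to be needed. For the equality case, tightness of every estimate forces both $B_{i^*}$ and $B_{j^*}$ to equal $w_{k-1}^{\pm 1}$ by the inductive characterisation of equality, collapses all other arches and central $a$-letters to the minimum, and thereby reduces $w$ to the recursive form $t^{-1} w_{k-1}^{\mp 1} t \cdot a \cdot t^{-1} w_{k-1}^{\pm 1} t = w_k^{\pm 1}$, giving $w =_G a^{\pm \tet{k}}$.
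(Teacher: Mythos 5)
Your overall scheme matches the paper's: both use induction on $k$ and the same structural decomposition of $w$ into arches $t^{-1}B_i t$, read off from Lemma~\ref{lem:prodofconj}, with each body $B_i$ representing a power of $a$ and the bodies multiplying to the identity. The base case and the accounting of four $t$'s plus one central $a$-letter are also as in the paper.

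The genuine gap is exactly the one you flag: the claim that \emph{two} arches must carry $a_k^\pm$ in their $\psi^{-1}$-images. You do not prove this, and it is not needed. The paper's argument sidesteps it cleanly. Since every $v_\alpha$ lies in $\langle a\rangle$, the $v_\alpha$ commute, so $\prod_\alpha v_\alpha =_G 1$ gives $\prod_{\alpha\neq\alpha^*} v_\alpha =_G v_{\alpha^*}^{-1}$. As $v_{\alpha^*}$ is a geodesic subword, $d(1, a^{\beta_{\alpha^*}}) = \ell(v_{\alpha^*})\geq 3\cdot 2^k-5$ by induction, and therefore the concatenation of the remaining arch bodies, which \emph{represents} $a^{\beta_{\alpha^*}}$, must have total length at least $3\cdot 2^k - 5$ as well --- regardless of whether any single one of them is long or contains $a_{k-1}$. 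This gives $\ell(w)\geq 2(3\cdot 2^k-5)+4+1 = 3\cdot 2^{k+1}-5$ with one application of the induction hypothesis rather than two, and the equality case falls out the same way (both sides must have length exactly $3\cdot 2^k-5$, forcing $m=2$ and, by the inductive equality clause, $v_1 =_G v_2 =_G a^{\pm\tet{k-1}}$, hence $w =_G a^{\pm\tet{k}}$). Trying to establish your stronger "two long arches" claim directly would essentially reproduce this length argument, so the detour you anticipate needing is avoidable; I'd recommend replacing the two-arches claim with the commuting-product observation above. (A minor imprecision in your equality case: the inductive clause gives $B_{i^*} =_G a^{\pm\tet{k-1}}$, an equality of group elements, not $B_{i^*} = w_{k-1}^{\pm1}$ as words; the group-level conclusion $w =_G a^{\pm\tet{k}}$ still follows.)
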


\begin{proof}
Without loss of generality we may assume that $k=\max\{j: a_j^\pm\textrm{ is contained in } w'\}$. We prove the statement by induction on $k$. For $k=0$ the statement is trivial.

For $k>0$, Lemma~\ref{lem:prodofconj} yields that $w'=a_0^{l_0}v'_1a_0^{l_1}v'_2a_0^{l_2}\ldots v'_ma_0^{l_m}$ where each $v'_\alpha=_{G'}a_{1}^{\beta_\alpha}$ for some $\beta_\alpha$ and the product $v'_1v'_2\ldots v'_{m}=1$. 
The $v'_\alpha$ are subwords of the pseudo-geodesic word $w'$ and according to Lemma~\ref{containingnoai} do not contain any letters $a_0$.
Then, for some words $v_\alpha\in\{a,t\}^*$ we obtain: $$w=a^{l_0}t^{-1}v_1ta^{l_1}t^{-1}v_2t a^{l_2}\ldots a^{l_{m-1}}t^{-1}v_mta^{l_m}$$  where each $v_\alpha=_{G}a^{-\beta_\alpha}$ and the product $v_1v_2\ldots v_{m}=1$ (note that this is the same as saying that $\sum\beta_\alpha=0$). Since $w$ is geodesic, $\beta_\alpha\neq 0$ for all $\alpha$ which immediately implies $m\geq 2$. Therefore the number of $t$'s or $t^{-1}$'s outside of the $v_\alpha$ is at least $4$.

Since $w'$ contains a letter $a_k^\pm$, there exists an $\alpha^*$, such that $\psi^{-1}(v'_{\alpha^*})$ contains $a_{k-1}^\pm$. As a subword of $w$, the word $v_{\alpha^*}$ is geodesic, it has by induction hypothesis length at least $3\cdot 2^{k}-5$. Because $v_1v_2\ldots v_{m}=1$ the product of the other $v_\alpha$ also has length at least $3\cdot 2^{k}-5$, and furthermore, we have at least four $t$'s and an $a^{l_\alpha}$, the bound follows.

For the second assertion of the lemma, we again apply induction on $k$. The case $k=0$ is trivial. So assume the statement correct for $k-1$. The $v_{\alpha^*}$ defined above has -- according to the first part of this lemma -- length at least $3\cdot 2^k-5$. Since $\Pi_{\alpha\neq\alpha^*}v_\alpha=v_{\alpha^*}$ and $v_{\alpha^*}$ is geodesic, we obtain that also $\Pi_{\alpha\neq\alpha^*}v_\alpha$ contains at least $3\cdot 2^k-5$ letters. In addition $w$ contains at least 4 letters $t^\pm$ and one $a^\pm$. This only works out if $w=t^{-1}v_1ta^\pm t^{-1}v_2t$ and $\ell(v_1)=\ell(v_2)=3\cdot 2^k-5$. Since $w$ represents a power of $a$ we obtain $wa^x=_G1$ for some (huge) $x\in\ZZ$. Britton's lemma now implies that $v_1=v_2^{-1}$ has to be a power of $a$ and by induction hypothesis $v_1=_Ga^{-\tet{k-1}}$ and $v_2=_Ga^\tet{k-1}$. Hence
\begin{eqnarray*} 
w&=_G &t^{-1}a^{-\tet{k-1}}t\ a^\pm\ t^{-1}a^{\tet{k-1}}t\\
&=_G& a^{\pm\tet{k}}.
\end{eqnarray*}
\end{proof}

Furthermore we can bound the power of $a$ which is represented by a word of given length avoiding high powers of $t$.

\begin{lemma}\label{noakisshort}
Let $k,L\geq 1$. Let $v$ be a word  of length less than $L\cdot 2^{k-1}$ in $G$ representing an element $a^n$ for some $n\in\ZZ$ such that $\psi^{-1}(v)$ does not contain the letter $a_{k}^\pm$. Then, 
$$|n|< {p^{p^{\cdot^{\cdot^{\cdot^{p^{L}}}}}}}$$
for $k>1$ where the number of $p$'s is $k-1$ and $|n|<L$ for $k=1$.
\end{lemma}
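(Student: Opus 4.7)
I would prove the lemma by induction on $k\ge 1$.

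\textbf{Base case $k=1$.} The hypothesis says $\psi^{-1}(v)$ avoids $a_1^{\pm}$. The key observation is that in the subgroup $\langle a_j:j\in\ZZ\rangle\le G_p$, the relations derived from the presentation are $a_{j+1}^{-1}a_ja_{j+1}=a_j^{p}$ for $j\in\ZZ$, and the only relation involving $a_0$ is the one at $j=0$, which requires $a_1$. Removing $a_1$ therefore splits this subgroup as a free product $\langle a_j:j\le 0\rangle\ast\langle a_j:j\ge 2\rangle$, inside which $\langle a_0\rangle$ is itself a free factor. So the signed $a_0$-exponent of any word in the restricted alphabet that represents $a^n$ equals $n$, giving $|n|\le\#\{a^{\pm}\in v\}\le\ell(v)<L$.

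\textbf{Inductive step $k\Rightarrow k+1$.} Given $v$ of length $<L\cdot 2^{k}$ with $\psi^{-1}(v)$ avoiding $a_{k+1}^{\pm}$ and $v=_{G}a^{n}$. The same free-product argument (now applied at the index $k+1$) shows that for our purposes we may assume $\psi^{-1}(v)$ lies in $G_{0}^{k}$ (the negative-index and $j\ge k+2$ contributions lie in free-product factors separate from $\langle a_0\rangle$ and must freely cancel). Lemma~\ref{lem:prodofconj} then produces a decomposition
\[
\psi^{-1}(v)=a_{0}^{\ell_{0}}\,v_{1}\,a_{0}^{\ell_{1}}\,v_{2}\cdots v_{r},\qquad v_{\alpha}=_{G_{0}^{k}}a_{1}^{\beta_{\alpha}},\qquad \sum_{\alpha}\beta_{\alpha}=0.
\]
Transported back by $\psi$, each $v_{\alpha}$ sits inside a bubble $t^{-1}V_{\alpha}t$ of $v$, where $V_{\alpha}$ has $\psi^{-1}(V_{\alpha})$ (at its own shifted depth) avoiding $a_{k}^{\pm}$ and representing $a_{0}^{\beta_{\alpha}}$ in $G_p$. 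For each $\alpha$ I set $L_{\alpha}:=\lceil(\ell(V_{\alpha})+1)/2^{k-1}\rceil$ and apply the inductive hypothesis to obtain $|\beta_{\alpha}|<\tet{k-1}p^{L_{\alpha}}$.

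\textbf{Reconstruction of $n$.} Using the identity $a_{1}^{-\gamma}a_{0}^{\ell}a_{1}^{\gamma}=a_{0}^{p^{\gamma}\ell}$ (for $\gamma\ge 0$, and the symmetric identity for $\gamma<0$), I move all $a_{1}$-factors of the decomposition to one side. Tracking the accumulated $p$-factors via the partial sums $S_{j}:=\sum_{\alpha\le j}\beta_{\alpha}$ yields
\[
n=\sum_{j}p^{S_{j}}\ell_{j},\qquad |n|\le p^{\sum_{\alpha}|\beta_{\alpha}|}\cdot\sum_{j}|\ell_{j}|.
\]
The length budget $\sum_{\alpha}(\ell(V_{\alpha})+2)+\sum_{j}|\ell_{j}|<L\cdot 2^{k}$ constrains both the $L_{\alpha}$'s and the $|\ell_{j}|$'s; plugging in the inductive bounds for $|\beta_{\alpha}|$ and exploiting the superexponential growth of tetration yields $|n|<\tet{k}p^{L}$, as required.

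\textbf{Main obstacle.} The hard part is the final book-keeping. Naively, an adversary concentrating all the length into a single bubble pushes $L_{\alpha}$ up to $\approx 2L$, which a priori threatens to produce $|\beta_{\alpha}|$ on the order of $\tet{k-1}p^{2L}$ and hence $|n|$ on the order of $\tet{k}p^{2L}$. The resolution is a worst-case analysis: the constraint $\sum_{\alpha}\beta_{\alpha}=0$ forces at least two non-trivial bubbles to balance out, and the two-letter $t^{-1}\!\ldots\! t$ overhead per bubble plus the depth-$0$ $a^{\pm}$'s between bubbles eat the doubled budget. With a small constant slack built into the inductive statement, the surviving worst case indeed gives $\sum_{\alpha}|\beta_{\alpha}|<\tet{k-1}p^{L}$, closing the induction.
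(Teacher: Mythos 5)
Your approach shares the paper's overall skeleton (induction on $k$, the decomposition from Lemma~\ref{lem:prodofconj}, the formula $n=\sum_j\ell_j p^{S_j}$), but it is missing the one idea that makes the inductive step actually close, and the place you hand-wave --- your ``Main obstacle'' paragraph --- is precisely where that idea lives. Your bound $|n|\le p^{\sum_\alpha|\beta_\alpha|}\cdot\sum_j|\ell_j|$ carries a multiplicative prefactor $\sum_j|\ell_j|$ that you never eliminate, and the lemma has \emph{no} slack to absorb it: the claimed bound $|n|<T(k-1,L)$, where $T(m,x)$ denotes a tower of $m$ $p$'s over $x$, is exactly $p^{T(k-2,L)}$, so any residual factor $\sum_j|\ell_j|>1$ already overshoots once $\max_j|S_j|$ is allowed to approach $T(k-2,L)$. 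Asserting that ``a small constant slack built into the inductive statement'' closes the gap is not a proof; the inductive statement as written has none, and one cannot get $\ell(V_{c_1})<L'\cdot 2^{k-2}$ with $L'$ strictly an integer smaller than $L$ merely from the $+4$ overhead of the $t^{\pm1}$ bookends --- the saving is only $O(1)$ letters, which shaves $O(2^{-(k-2)})$ off the effective $L'$ and vanishes for large $k$.

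What the paper does instead is a maximality-plus-rearrangement argument that dispenses with the bookkeeping entirely. One takes $v$ to be a word of the given length for which $|n|$ is \emph{maximal}, then shows by an explicit surgery (collecting all $t$-bubbles to one side and concentrating the $a$-block at the point of maximal partial sum $S_j$) that such a $v$ must reduce to the canonical form $v=t^{-1}v_1\,t\,a\,t^{-1}v_2\,t$ with $v_1^{-1}=_G v_2$ --- a single nested pair, with exactly one $a$ between them. In this form the middle exponent is $l=1$, so $n=p^\alpha$ \emph{exactly}, the prefactor $\sum_j|\ell_j|$ collapses to $1$, and $\ell(v_1)=(\ell(v)-5)/2<L\cdot 2^{k-2}$ lets the inductive hypothesis apply cleanly with the same $L$. (A separate length-tradeoff argument, building $v''$ by moving a factor of $p$ into the exponent, rules out $l\ge3$; parity then forces $l=1$.) Without this reduction, the per-bubble-budget computation you sketch would have to prove that the ``concentrated adversary'' is never optimal, which is exactly what the surgery establishes and your sketch does not.

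Two smaller points: your base case, while correct, is more machinery than needed --- if $\psi^{-1}(v)$ contains no $a_1^\pm$ then a reduced $v$ representing a power of $a$ contains no $t^\pm$ at all, so $v=a^n$ and $|n|\le\ell(v)<L$ immediately. And even taking your combination step at face value, the exponent in your estimate should be $\max_j|S_j|$, not $\sum_\alpha|\beta_\alpha|$; the latter is strictly larger when there are cancelling bubbles and would make the bookkeeping worse, not better.
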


\begin{proof}
Let $v'=\psi^{-1}(v)$. Without loss of generality we assume $v'$ to be reduced. First we show, that we also may assume $j:=\max\{\alpha: a_\alpha ^\pm\textrm{ is contained in } v'\}<k$. 

So assume that $j>k$. Since $v'a_0^{-n}=_{G'}1$, Britton's lemma implies that $v'$ contains a subword  $a_j^{-1}wa_j$ or $a_jwa_j^{-1}$, where $w$  represents an element of $\langle a_{j-1}^{p}\rangle$ or $\langle a_{j-1}\rangle$, respectively. Replacing any such subword $a_j^{-1}va_k$ by $a_{j-1}^{pl}$  or $a_jva_j^{-1}$ by $a_{j-1}^l$, respectively, for some suitable~$l$, we obtain a word with less occurrences of $a_j$ which still represents $a_0^n$. Repeating this procedure as long as there letters $a_j$ in our word, we arrive at a word, which still represents $a_i^n$ but does not contain $a_j^\pm$. We repeat this procedure with $a_{j-1}^\pm$ and all $a_\alpha ^\pm$ down to $\alpha=k+1$ and end up with a word $v''a_0^{-n}=_{G'}1$ that consists only of letters $a_0^\pm,\ldots a_{k-1}^\pm$ and $a_{k+1}^\pm$. This word contains no subword representing an element of $\langle a_k\rangle$ or $\langle a_k^p\rangle$, so all the $a_{k+1}^\pm$ have to freely cancel each other. The resulting reduced subword $v'''$ of $v'''a_0^{-n}$ can be obtained from $v'$ by deleting some of the letters of $v'$. Hence $\ell(\psi(v'''))<\ell(v)$, $\psi(v''')=_Gv$, and $j:=\max\{l: a_\alpha ^\pm\textrm{ is contained in } v'''\}<k$.

Therefore we may assume $j<k$ and $v'=\psi^{-1}(v)$ does not contain any $a_\alpha$ with $\alpha\geq k$.

We proceed by induction on $k$. Let $k=1$. The word $\psi^{-1}(v)$ does not contain a letter $a_1^\pm$. Therefore $v=a^\alpha$  for some $|\alpha|<L$. Obviously $n=\alpha$ and we are done.

Let  $k\geq 2$ and assume the statement to be true for $k-1$. We only consider the case that $n$ is positive, as the other case is symmetric. We may assume that $v$ is such that $n$ is maximal among all possible values for $n$ over all choices of $v$ as in the lemma.
Note that then  $\ell(v)=L\cdot 2^{k-1}-1$, and furthermore, $v$ is shortest possible among all $v$ satisfying the assumptions of the lemma.

Now, as in the proof of Lemma~\ref{containingakinG} we obtain
\[v=a^{l_0}t^{-1}v_1ta^{l_1}t^{-1}v_2t\ldots t^{-1}v_mta^{l_m},\]
with $v_i=_Ga^{-\beta_i}$ for some $\beta_i$ such that $\sum \beta_i=0$. But now we can calculate $n$ in terms of $l_i$ and $\beta_i$, namely 
\[
n= l_0+\sum_{i=1}^m l_ip^{\sum_{j=1}^i\beta_j}\leq ( \sum_{i=0}^m |l_i|)\cdot p^{\max_i\sum_{j=1}^i\beta_j}=:y.
\] 
Let $c$ be such that $\max_i\sum_{j=1}^i\beta_j=\sum_{j=1}^c\beta_j$. By deleting all but four letters $t$ and rearranging the letters $a$ we obtain the word 
\[
v'=t^{-1}v_1v_2\ldots v_cta^{\sum_{i=0}^ml_i}t^{-1}v_{c+1}\ldots v_mt.
\]
Then $\ell(v')\leq \ell(v)$ and $v'=_Ga^y$. Since $v$ was chosen such that $n$ is maximal, we obtain that $y=n$. Then $\ell (v)=\ell(v')$. So, we actually did not delete any $t$ when creating $v'$, and thus $v=v'$. Hence
\[
v=t^{-1}v_1ta^lt^{-1}v_2t,
\]
with $v^{-1}_1=_Gv_2=_Ga^\alpha$ for some $\alpha\geq 0$ and $n=l\cdot p^\alpha$.

Assume that $l\geq 3$. Then we can build the word $v''=t^{-1}v_1a^{-1}ta^{l-2}t^{-1}av_2t$ which is of the same length as $v$ and represents $a^{((l-2)p)p^\alpha}$ in contradiction to the maximality of $n$. Therefore $l\leq 2$. Since $v$ is shortest possible under the assumptions of the lemma so are $v_1$ and $v_2$, and hence $\ell(v_1)=\ell(v_2)$. Since $\ell(v)$ is odd, it follows that $l=1$ and 
\[\ell(v_1)=\frac{L\cdot 2^{k-1}-1-5}{2}<L\cdot 2^{k-2}.\]
By induction hypothesis
\[
|\alpha | < {p^{p^{\cdot^{\cdot^{\cdot^{p^{L}}}}}}}
\]
where the number of $p$'s is $k-2$ and since $n=1\cdot p^\alpha$ we obtain the desired inequality.
\end{proof}

The two preceding lemmas imply Lemma~\ref{coro}, that is, that  the $w_k$ are geodesic:

\begin{proof}[Proof of Lemma~\ref{coro}]
 Let $w$ be a geodesic word such that $w=_{G}w_k=_Ga^\tet{k}$. 
Recall that we have chosen $p\geq 20>12$. Since $\ell(w)<12\cdot 2^{k-1}$ Lemma~\ref{noakisshort} implies that $w$ has to contain the letter $a_k$ or $a_k^{-1}$. On the other hand, by Lemma~\ref{containingakinG} we know that any word containing $a_k$ or $a_k^{-1}$ is as least as long as $w_k$. So the statement follows.
\end{proof}

\section{The proof of Lemma~\ref{lemma:final}}\label{sec:final}

This final section is devoted to the proof of Lemma~\ref{lemma:final}, which is the only ingredient missing for our proof of Theorem~\ref{thm:small_distance}. We build on results from Section~\ref{sec:lemmas}. Before we start to consider the general situation let us focus on some cases of small values for $n$, which turns out not only to be more accessible but will also be of importance during the proof of the general case. For this case we actually need stronger statements:
\begin{lemma}\label{lem:n=p_case}
 For $k\geq 2$ we have $d(1,a^{\tet{k}-p})\geq 3\cdot 2^{k+1}$.
\end{lemma}
\begin{proof}
 Let $v=_G a^{\tet{k}-p}$ and set $L=12$. Then $\tet{k}-p>p^{p^{.^{.^{.^{12}}}}}$, where the number of $p$'s is $k-1$ For contradiction, assume $\ell(v)<3\cdot 2^{k+1}$. Then by Lemma~\ref{containingnoai}, the word $v':=\psi^{-1}(v)$ contains the letter $a_k$.

By Lemma~\ref{lem:prodofconj} we can write $v'$ as $a_0^{l_0}v_1a_0^{l_1}v_2a_0^{l_2}\ldots v_ma_0^{l_m}$, with $l_1,\ldots,l_{m-1}\neq 0$ and thus, 
\begin{equation}
v=a^{l_0}t^{-1}u_1ta^{l_1}t^{-1}u_2ta^{l_2}\ldots t^{-1}u_m ta^{l_m},
\end{equation}
for some $u_i=_G a^{n_i}$. Since one of the $v_i$ contains a letter $a_k$, one of the $u_i':=\psi^{-1}(u_i)$, say $u_j'$ contains a letter $a_{k-1}$. Hence, Lemma~\ref{containingakinG} implies $\ell(u_j)\geq 3\cdot 2^k-5$. Another consequence of Lemma~\ref{lem:prodofconj} is that 
\begin{equation}\label{eq:j=neqj}
 \prod_{i\neq j} u_i =_G u_j^{-1}
\end{equation}
 and since $u_j$ is geodesic $\sum_{i\neq j}\ell(u_i)\geq 3\cdot 2^k-5$. This already implies $m\leq 3$. Assume $m=3$. This implies $\ell({u_j})=3\cdot 2^k-5$, since otherwise we obtain $\ell(v)\geq 2\cdot(3\cdot 2^k -4) + 8$ (the word $v$ contains 6 additional $t,t^{-1}$'s and at least two additional $a$'s). By Lemma~\ref{containingakinG} this implies $u_j=_G a^{\pm\tet{k-1}}$. By \eqref{eq:j=neqj} and Lemma~\ref{containingnoai} we obtain that $\psi^{-1}(\prod_{i\neq j} u_i)$ also contains $a_{k-1}$. So for $j'\neq j$ the word $u_j'$ contains $a_{k-1}$. Since $u_j\neq u_j'^{\pm}$, Lemma~\ref{containingakinG} implies $\ell(u_j')\geq 3\cdot 2^k-4$ and therefore $\ell(v)\geq 3\cdot 2^k-5+3\cdot 2^k-4+1+8$. 

So, $m=2$ and $v=a^{l_0}t^{-1}u_1ta^{l_1}t^{-1}u_2ta^{l_2}$ and similar arguments as above show that $\ell(u_1)=\ell(u_2)\geq 3\cdot 2^{k}-5$. So $|l_0|+|l_1|+|l_2|<10$. By Lemma~\ref{lem:prodofconj_extended} we obtain $\tet{k}-p=l_0+l_1\cdot p^\delta+l_2$ for some $\delta$. But this is impossible since $p\geq 20 > 16$.
\end{proof}

\begin{lemma}\label{lem:small_n}
 Let $k\geq 2$ and $n\in\ZZ$ be such that no geodesic word representing $a^n$ contains the letter $t$ (which is easily seen to be equivalent to $|n|< \frac{p+7}{2}$), then  
\[
 d(1,a^{\tet{k}-n})\geq 3\cdot 2^{k+1}-5+|n|-\delta_{|n|,\frac{p+6}{2}}.
\]
\end{lemma}
\begin{proof}
  We only discuss the case $n>0$, the case $n<0$ can be shown analogously and the case $n=0$ is a consequence of Lemma~\ref{coro}. Let $v=_G a^{\tet{k}-n}$ be a geodesic word. By Lemma~\ref{lem:prodofconj} we can write $\psi^{-1}(v)$ as $a_0^{l_0}v_1a_0^{l_1}v_2a_0^{l_2}\ldots v_ma_0^{l_m}$, with $l_1,\ldots,l_{m-1}\neq 0$ and thus, 
\begin{equation}\label{eq:small_n_v=}
v=a^{l_0}t^{-1}u_1ta^{l_1}t^{-1}u_2ta^{l_2}\ldots t^{-1}u_m ta^{l_m}
\end{equation}
  where $u_i=_G a^{-\alpha_i}$, for $i=1,\ldots m$. Clearly, 
\begin{equation}\label{eq:small_n_thesum}
 \tet{k}-n=l_0+\sum_{i=1}^ml_i p^{\sum_{j=1}^i\alpha_j},
\end{equation}
 and the sum of all $\alpha_i$ equals $0$. Note that since $v$ is geodesic, we may assume that $\sum_{j=1}^i\alpha_j=0$ holds only for $i=m$ and $|l_i|<p$ for $i=1,\ldots m$. Considering equation~\eqref{eq:small_n_thesum} modulo $p$ we obtain $l_0+l_m\equiv -n(p)$, hence $l_0+l_m=-n$ or $l_0+l_m=p-n$. If $l_0+l_m=-n$, then, according to~\eqref{eq:small_n_thesum}, the subword $\tilde{v}:=t^{-1}u_1ta^{l_1}t^{-1}u_2ta^{l_2}\ldots t^{-1}u_m t$ of $v$ then has to represent the group element $a^{\tet{k}}$ and by Lemma~\ref{coro} we have $\ell(\tilde{v})\geq 3\cdot 2^{k+1}-5$, which implies the statement. 

Now assume $l_0+l_m=p-n$. Since $n<\frac{p+7}{2}$ we know that $p-n\geq n-5-\delta_{|n|,\frac{p+6}{2}}$. According to~\eqref{eq:small_n_thesum} the subword $\tilde{v}:=t^{-1}u_1ta^{l_1}t^{-1}u_2ta^{l_2}\ldots t^{-1}u_m t$ of $v$ then has to represent the group element $x=a^{\tet{k}-p}$. By Lemma~\ref{lem:n=p_case} we obtain $d(1,x)+l_0+l_m\geq 3\cdot 2^{k+1}+n-5-\delta_{|n|,\frac{p+6}{2}}$. 
\end{proof}

Now we are ready to prove the final missing piece of Theorem~\ref{thm:distances}:

\begin{proof}[Proof of Lemma~\ref{lemma:final}]
First of all, observe that by Lemma~\ref{containingakinG}, every geodesic word representing $a^n$ is $a_k$-less. So by Lemma~\ref{noakisshort}, we know that 
\begin{equation}\label{n<ppp}
n<{p^{p^{\cdot^{\cdot^{\cdot^{p^{12}}}}}}}
\end{equation}
where the number of $p$' s is $k-1$. 

In order to prove the lemma, we use induction on $k$. For $k=1$, we only have to check that $d(1,a^{p-n})\geq 7+\min\{n,1\} \geq 12-5+\min\{n,1\}-0$ for all $n$ with $n<12$, by~\eqref{n<ppp}. This is true, since testing all words with at most $6+\min\{n,1\}$ letters we see that none of them represents $a^{p-n}$ (note that by the choice of $p$, we have  $p-n>8$).

So assume the lemma valid for $k-1$, our aim is to show it for $k\geq 2$. Suppose otherwise, that is, assume there is a word $v$ with $v=_{G} a^{\tet{k}-n}$ and 
\begin{align}\label{ellv}
 \ell (v)< & \ 3\cdot 2^{k+1}-5+\min\{d_n,3\cdot 2^k-5\}-\min\{d_n,2^{k-1}\}\\ 
\leq & \ 3\cdot (2^{k+1}+2^k) -10\notag \\
\leq & \ 18\cdot 2^{k-1} -10.\notag
\end{align}
We claim that 
\begin{equation}\label{ctnsak}
 \psi^{-1}(v)\text{ contains the letter }a_k.
\end{equation}
 In fact, otherwise we may apply Lemma~\ref{noakisshort} to $v$, with $L=18$, to obtain that $$\tet{k}-{p^{p^{\cdot^{\cdot^{\cdot^{p^{12}}}}}}} <\tet{k}-n<{p^{p^{\cdot^{\cdot^{\cdot^{p^{18}}}}}}}$$ where on either side the number of $p$' s equals $k-1$, and the first inequality follows from~\eqref{n<ppp}. This, however, is  impossible, as $p\geq 20$. We have thus proved~\eqref{ctnsak}.

Now, by Lemma~\ref{lem:prodofconj}, we can write $\psi^{-1}(v)$ as $a_0^{l_0}v_1a_0^{l_1}v_2a_0^{l_2}\ldots v_ma_0^{l_m}$, with $l_1,\ldots,l_{m-1}\neq 0$ and thus, 
\begin{equation}\label{v=}
v=a^{l_0}t^{-1}u_1ta^{l_1}t^{-1}u_2ta^{l_2}\ldots t^{-1}u_m ta^{l_m}
\end{equation}
  where $u_i=_G a^{-\alpha_i}$, for $i=1,\ldots m$. Clearly, 
\begin{equation}\label{thesum}
 \tet{k}-n=l_0+\sum_{i=1}^ml_i p^{\sum_{j=1}^i\alpha_j},
\end{equation}
 and the sum of all $\alpha_i$ equals $0$. Note that since $v$ is geodesic, we may assume that $l_i<p$ for $i=1,\ldots m$.

Suppose $c\in\{1,\ldots,m\}$ is such that $\psi^{-1}(u_c)$ contains the letter $a_{k-1}^\pm$. Then by Lemma~\ref{containingakinG}, 
\begin{equation}\label{lengthuc}
 \ell(u_c)\geq 3\cdot 2^k -5.
\end{equation}
So, as $3\cdot (3\cdot 2^k -5)>\ell (v) -5$, and moreover, since each $u_c$ as above gives rise to two letters $t$, we conclude that
 there are less than $3$  indices $c$ such that  $\psi^ {-1}(u_c)$ contains the letter $a_{k-1}^\pm$. On the other hand,  by~\eqref{ctnsak}, there is at least one such index, say $c_1$.

Moreover, since the expression in~\eqref{v=} contains $m$ times a subword of the form $t^{-1}u_it$, and also at least $m-1$ letters $a$, we can use~\eqref{ellv} and~\eqref{lengthuc} to get that
\begin{equation}\label{m}
 m\leq\frac{\ell (v)-\ell (u_{c_1})+2}{4} < 3\cdot 2^{k-1}.
\end{equation}

Together,~\eqref{thesum} and~\eqref{m} imply that there is an index $b$ such that $$l_b \cdot p^{\sum_{j=1}^b\alpha_j}>\frac{{p^{p^{\cdot^{\cdot^{\cdot^{p^{p-1}}}}}}}}{3\cdot 2^{k-1}}$$ where the number of $p$'s equals $k-1$. Hence, since $p>6$, and since $l_b < p$, we know that  $$ p^{\sum_{j=1}^b\alpha_j}>\frac{{p^{p^{\cdot^{\cdot^{\cdot^{p^{p-1}}}}}}}}{p^{k}}$$ where again, the number of $p$'s is $k-1$. Taking the logarithm, we obtain that 
\begin{equation}\label{defy}
 x:=\sum_{j=1}^b\alpha_j> {p^{p^{\cdot^{\cdot^{\cdot^{p^{p-1}}}}}}}-k =:y
\end{equation}
 where  the number of $p$'s is $k-2$. Because $\sum_{j=1}^b\alpha_j=-\sum_{j=b+1}^m\alpha_j$, this yields that 
$$u_1u_2\ldots u_b=_Gu_{b+1}u_{b+2}\ldots u_m=_Ga^x.$$ So, by Lemma~\ref{noakisshort}, there is a second index $c_2$ such that  $\psi^{-1}(u_{c_2})$ contains the letter $a_{k-1}^\pm$. We may assume that $c_2>b\geq c_1$. Note that by what we said above, $c_1$ and $c_2$ are the only indices $c$ such that  $\psi^{-1}(u_{c})$ contains the letter $a_{k-1}^\pm$.

Consider the subword $$z:=t^{-1}u_{c_1}ta^{l_{c_1}}t^{-1}u_{c_1+1}\ldots t^{-1}u_{c_2}t$$ of $v$. By the choice of the $c_i$,
\begin{equation}\label{dasechtelengthz}
  \ell (z)\geq 2\cdot (3\cdot 2^ k-5) +5.
\end{equation}
 So, 
\begin{equation}\label{lengthz}
   \ell (v)- \ell (z)\leq 3\cdot 2^k-5.
\end{equation}

Set
\[
 u:=u_{c_1-1}^{-1}u_{c_1-2}^ {-1}\ldots u_1^{-1}u_m^{-1}u_{m-1}^{-1}\ldots u_{c_2+1}^{-1}
\]
and consider the word 
\[
v':=a^ {l_0} t^{-1} u_1 t a^{l_1}\ldots a^{l_{c_1-1}}t^{-1} u t a^{l_{c_2}}t^{-1}u_{u_{c_2+1}}t a^{l_{c_2+1}}\ldots t^{-1} u_m t a^{l_m}.
\]
Then $v'=_G a^q$ where 
\[q=l_0 + \sum_{i=1}^{c_1-1}l_i p^{\sum_{j=1}^i\alpha_j} + \sum_{i=c_2}^{m}l_i p^{\sum_{j=1}^i\alpha_j}.\]
Here we used the fact that $\sum_{j=1}^{i}\alpha_j= - \sum_{j=i+1}^{m}\alpha_j$.

By~\eqref{lengthz} and by the definition of $v'$, we know that $\ell (v')\leq 2\cdot (3\cdot 2^k-5)$, and moreover, since $\psi^{-1}(v')$ does not contain any letter $a_k^\pm$, for $k>2$ we obtain that $$|q|< {p^{p^{\cdot^{\cdot^{\cdot^{p^{12}}}}}}}$$ where the number of $p$'s is  $k-1$. For $k=2$ we obtain $|q|<p^7$ since in this case $\psi^{-1}(v')$ is even $a_1$ free and of length less than $7$.
Set
\begin{equation}\label{def_of_s}
 s:=\sum_{\gamma=c_1}^{c_2-1}l_{\gamma}p^{\sum_{j=1}^{\gamma}\alpha_j}=\tet{k}-n-q.
\end{equation}
Then, for $k>2$, 
\begin{equation}\label{s}
 \tet{k}-2 {p^{p^{\cdot^{\cdot^{\cdot^{p^{12}}}}}}} <s < \tet{k}+ 2 {p^{p^{\cdot^{\cdot^{\cdot^{p^{12}}}}}}},
\end{equation}
where the number of $p$'s on each side is again $k-1$ and respectively, for $k=2$,
\begin{equation}\label{s_k=2}
 \tet{2}-2p^7 <s < \tet{2}+ 2p^7,
\end{equation}

On the other hand, by~\eqref{lengthuc} and since $\ell (v)< 3\cdot (2^{k+1}+2^k)-10$ by~\eqref{ellv}, we have that 
\begin{equation}\label{sumlengthui}
       \sum_{i=c_1+1}^{c_2-1}\ell (u_i) <3\cdot 2^k-5,
 \end{equation}
and, for each of these indices $i$, we know that $\psi^{-1}(u_i)$ is $a_{k-1}$-free.  Therefore, for $k\geq 2$, the exponents of $p$ in the sum expression~\eqref{def_of_s} of $s$ differ less than $ {p^{p^{\cdot^{\cdot^{\cdot^{p^{12}}}}}}}$ where the number of $p$'s is $k-2$. For $k=2$ the same differences are less than $7=3\cdot 2^2-5$ since in this case $\psi^{-1}(\prod_{i=c_1+1}^{c_2-1} u_i)$ is even $a_1$ free and of length less than $7$.

We claim that this implies that 
\begin{equation}\label{eq:mittag}
 s=\tet{k}.
\end{equation}
In fact, for $k=2$ we know by~\eqref{defy} that one summand in \eqref{def_of_s} is divisible by $p^x$ for some $x>(p-1)-2$ and therefore by the argument above each summand is divisible by $p^{(p-2)-6}$. So, $s=\delta\cdot p^{p-8}$ and the only possible value for $s$ in the interval \eqref{s_k=2} is $\tet{2}$.

For $k>2$ we obtain by~\eqref{defy} that one summand in \eqref{def_of_s} is divisible by $p^x$ for some
$$x>p^{p^{\cdot^{\cdot^{\cdot^{p^{p-1}}}}}}-k$$ 
where the number of $p$'s is $k-1$.
Therefore, by the argument above each summand is divisible by $p^{x'}$ for some 
$$x'>{x-{p^{p^{\cdot^{\cdot^{\cdot^{p^{12}}}}}}}>p^{\cdot^{\cdot^{\cdot^{p^{p-2}}}}}}$$
where the number of $p$'s is $k-2$. So we can write 
$$s=\delta \cdot {p^{p^{\cdot^{\cdot^{\cdot^{p^{p-2}}}}}}} $$
where the number of $p$'s is $k-1$, and $\delta$ is some integer.
As the term after $\delta$ is greater than the length of the interval from~\eqref{s}, we know that the only possible value for $s$ is $\tet{k}$.
This proves \eqref{eq:mittag}.

Thus $\tet{k}=\sum_{i=c_1}^{c_2-1}l_ip^{\sum_{j=1}^i\alpha_j}$. Since all the $\sum_{j=1}^i\alpha_j$ are different (as $v$ is geodesic) and the $l_i$ are in $(0,p)$, basic arithmetics (a sum of products of powers of $p$ with numbers smaller than $p$ can only give a power of $p$ if there is only one summand, and the factor is $1$) imply that $l_{c_1}=1$ and $c_2=c_1+1$. Hence $z$ can be written as
\[
 z=t^{-1}u_{c_1}tat^{-1}u_{c_2}t.
\]

Taking the logarithm in~\eqref{eq:mittag}, this implies that $\sum_{i=1}^{c_1}\alpha_i=\tet{k-1}$. Hence,
$$u_{c_1}=_Ga^{-\tet{k-1}+\sum_{i=1}^{c_1-1}\alpha_i}$$
and
$$u_{c_2}=_Ga^{\tet{k-1}+\sum_{i=c_2+1}^{m}\alpha_i}.$$

We now apply the induction hypothesis with $n_1:=\sum_{i=1}^{c_1-1}\alpha_i$ in the role of $n$, which satisfies the assumptions as $a^{\sum_{i=1}^{c_1-1}\alpha_i}=_Gu_1u_2\ldots u_{c_1-1}$. We then apply the   induction hypothesis again with $n_2:=\sum_{i=c_2}^{m}\alpha_i$ in the role of $n$, which satisfies the assumptions as $a^{\sum_{i=c_2}^{m}\alpha_i}=_Gu_{c_2}u_{c_2+1}\ldots u_{m}$. This gives for $j=1,2$
\[
 \ell(u_{c_j})\geq  3\cdot 2^k -5 + \min\{d_{n_j}, 3\cdot 2^{k-1}-5\} - \min\{d_{n_j},2^{k-2}\}.
\]
So, as $v$ contains $3m-1$ letters $a$ and $t$ outside the $u_i$, we obtain
\begin{align}\label{bbb}
 \ell (v) \geq & \ \ell (u_{c_1})+\ell (u_{c_2}) +\ell (u_1u_2\ldots u_{c_1-1}) +\ell (u_{c_2}u_{c_2+1}\ldots u_{m}) +3m-1\notag \\
\geq & \ \ell (u_{c_1})+\ell (u_{c_2}) +d_{n_1}+d_{n_2}+3m-1\notag \\
\geq & \ 3\cdot 2^{k+1}+3m-11\notag \\
 & \  + \sum_{j=1,2}(\min\{d_{n_j}, 3\cdot 2^{k-1}-5\} - \min\{d_{n_j},2^{k-2}\}+d_{n_j}).
\end{align}

Observe that by~\eqref{ellv}, and since the term in the sum above is always non-negative, we get that 
\begin{equation}\label{dngeq2k-2}
d_n\geq 2^{k-1}.
\end{equation}

We claim that for $j=1,2$
\begin{equation}\label{mindj}
d_{n_j}\leq 3\cdot 2^{k-1}-5\text{ or }d_{n_{3-j}}=0.
\end{equation}
Indeed, suppose  $d_{n_1}> 3\cdot 2^{k-1}-5$. Then  by comparing~\eqref{ellv} with~\eqref{bbb}, we obtain that 
\begin{align*}
3\cdot 2^{k}-5 -2^{k-1} \geq & \ \min\{d_n,3\cdot 2^{k}-5\} - 2^{k-1}\\
\geq & \    3m-6 +\min\{d_{n_1},3\cdot 2^{k-1}-5\} -2^{k-2} +d_{n_1}\\ & \  +\min\{d_{n_2},3\cdot 2^{k-1}-5\}-d_{n_2} + d_{n_2}\\
\geq & \  3m-6+3 \cdot 2^{k-1} -5-2^{k-2} +3\cdot 2^{k-1}-5+1\\ & \ +\min\{d_{n_2},3\cdot 2^{k-1}-5\}\\
\geq & \ 3m-5+3\cdot 2^{k} -10 -2^{k-2}+\min\{d_{n_2},3\cdot 2^{k-1}-5\}.
\end{align*}
Therefore, since $m\geq 3$,
\[
-2^{k-1} \geq -1 -2^{k-2}+\min\{d_{n_2},3\cdot 2^{k-1}-5\},
\]
implying that $$1\geq 2^{k-2}+\min\{d_{n_2},3\cdot 2^{k-1}-5\}.$$ Hence $d_{n_2}=0$. In the same way we get that the assumption  $d_{n_2}> 3\cdot 2^{k-1}-5$ implies that $d_{n_1}=0$.
This proves~\eqref{mindj}.

\smallskip

Let us define a new word $\tilde v$ which is obtained from $v$ by replacing $z$ with $t^{-1}\tilde v_1^{-1}\tilde v_2t$, where the $\tilde v_i$ are geodesic words for $a^{n_i}$. That is,
\[
\tilde v:=a^{l_0}t^{-1}u_1t\ldots a^{l_{c_1-1}}t^{-1}\tilde v_1^{-1}\tilde v_2ta^{l_{c_2}}t^{-1}u_{c_2}\ldots t^{-1}u_mta^{l_m}. 
\]
Clearly, $\tilde v$ represents $a^{n}$.

First, suppose that both $\tilde v_i$ contain a letter $t$.
Note that then we may assume that  each of the $\tilde v_i$ starts with a $t^{-1}$. Hence, $d_n\leq \ell(\tilde v)-2$. Observe that also, $d_{n_j}>0$. Hence, by~\eqref{mindj}, $d_{n_j}\leq 3\cdot 2^{k-1}-5$.

By~\eqref{ellv} and by~\eqref{dngeq2k-2},
 $$\ell (v)< 3\cdot 2^{k+1} -5 + d_n -2^{k-1}.$$
Moreover, since
$$\ell(z)=\ell(u_{c_1})+\ell(u_{c_2})+5,$$
and by~\eqref{mindj}, we obtain

\begin{align*}
d_n\leq &\ \ell(\tilde v)-2 \\
\leq &\ \ell(v)+\underbrace{d_{n_1}+d_{n_2}+2}_{\leq\ell(t^{-1}\tilde{v_1}^{-1}\tilde v_2t)}-\ell(z)-2\\
< & \ 3\cdot 2^{k+1}-5 + d_n - 2^{k-1}+d_{n_1}+d_{n_2}\\
& -\sum_{j=1,2}\left(3\cdot 2^{k}-5+d_{n_j} - \min\{d_{n_j},2^{k-2}\}\right)-5\\
\leq &\ d_n-2^{k-1}+\sum_{j=1,2}2^{k-2}\\
\leq & \ d_n,
\end{align*}
a contradiction.

So we may assume  that one of $\tilde v_1,\tilde v_2$ does not contain a letter $t$, say $\tilde v_1$. Then it might not be true that $d_n\leq \ell(\tilde v)-2$. On the other hand, we can then use Lemma~\ref{lem:small_n}. Hence the last calculation becomes

\begin{align}\label{k=3}
d_n\leq &\ \ell(\tilde v)\notag \\
\leq &\ \ell(v)+d_{n_1}+d_{n_2}+2-\ell(z)\notag\\
< & \ 3\cdot 2^{k+1}-5 + d_n - 2^{k-1}+d_{n_1}+d_{n_2}\notag \\
& -\sum_{j=1,2}\left(3\cdot 2^{k}-5+d_{n_j}\right) - \min\{d_{n_2},2^{k-2}\}+\delta_{|n|,\frac{p+6}{2}}-5 +2\notag\\
\leq &\ d_n-2^{k-1}+2^{k-2}+\delta_{|n|,\frac{p+6}{2}} +2\notag\\
\leq & \ d_n-2^{k-2}+\delta_{|n_1|,\frac{p+6}{2}} +2\notag,
\end{align}
which yields a contradiction for $k>3$. For $k=3$ we deduce $n_1=\frac{p+6}{2}>7=3\cdot 2^2-5$. So by~\eqref{mindj}, $d_{n_2}=0$. So we can substitute the last two lines of the calculation above with
\begin{align*}
d_n&<\ d_n-2^2+1+2\\
&\leq\  d_n-1,
\end{align*}
which is also a contradiction.

So let $k=2$. Then $\ell(v)<3\cdot 2^3-5+7-2=3\cdot 2^3$, by~\eqref{dngeq2k-2}. Therefore $m\leq 3$. If $m=3$ we have $\sum_{i=1}^3 \alpha_i=0$ and hence $\alpha_{c_1}\neq \alpha_{c_2}$. So $\alpha_{c_i}=\pm\tet{2}$ and $\alpha_{c_{3-i}}\neq\tet{2}$. By Lemma~\ref{containingakinG} we get $\sum_{i=1,2}\ell(u_{c_i})\geq 3\cdot 2^3-9$ and
\[
\ell(v)\geq\sum_{i=1}^3\ell ({u_i})+3m-1\geq 3\cdot 2^3>\ell(v).
\]

So we have $k=m=2$. This implies $v=a^{l_0}t^{-1}u_{c_1}tat^{-1}u_{c_2}a^{-n+l_0}$ and $\ell(v)=3\cdot 2^3-5+n> 3\cdot 2^3-5 + d_n-1$, which is impossible by~\eqref{ellv}.
\end{proof}

\bibliographystyle{plain}
\bibliography{graphs}

\end{document}